\theoremstyle{plain}
\newtheorem{theorem}{Theorem}[section]
\newtheorem{lemma}[theorem]{Lemma}
\newtheorem{corollary}[theorem]{Corollary}
\newtheorem{proposition}[theorem]{Proposition}
\newtheorem{problem}[theorem]{Problem}
\newtheorem{remark}[theorem]{Remark}
\newcommand{\Ad}{\mathop{\mathrm{Ad}}\nolimits}
\newcommand{\Isom}{\mathop{\mathrm{Isom}}\nolimits}
\newcommand{\rank}{\mathop{\mathrm{rank}}\nolimits}
\newcommand{\diag}{\mathop{\mathrm{diag}}\nolimits}
\newcommand{\R}{\mathbb{R}}
\newcommand{\C}{\mathbb{C}}
\newcommand{\Z}{\mathbb{Z}}
\begin{document}

\title[Kobayashi's properness criterion]{A proof of  Kobayashi's properness criterion from a viewpoint of metric geometry}
\author{Kento Ogawa, Takayuki Okuda}
\subjclass[2020]{
Primary 57S30, %Discontinuous groups of transformations 
Secondary 
22E40, %Discrete subgroups of Lie groups
22F30, %Homogeneous spaces
53C35, %Differential geometry of symmetric spaces
53C23} %Global geometric and topological methods (`a la Gromov); differential geometric analysis on metrics
%51F30 Lipschitz and coarse geometry of metric spaces
\keywords{proper action; discontinuous group; symmetric space; CAT(0) space}
\thanks{The first author is supported by JST SPRING, Grant Number JP-MJSP2132. 
The second author is supported by JSPS Grants-in-Aid for Scientific Research JP20K03589, JP20K14310, and JP22H0112}

\address[K.~Ogawa]{%
	Graduate School of Advanced Science and Engineering, Hiroshima University, 
    1-3-1 Kagamiyama, Higashi-Hiroshima City, Hiroshima, 739-8526, Japan.
        }
\email{knt-ogawa@hiroshima-u.ac.jp}
\address[T.~Okuda]{%
	Graduate School of Advanced Science and Engineering, Hiroshima University, 
    1-3-1 Kagamiyama, Higashi-Hiroshima City, Hiroshima, 739-8526, Japan.
        }
\email{okudatak@hiroshima-u.ac.jp}

\maketitle

\begin{abstract}
Let $G$ be a locally-compact group 
and $(H,L)$ a pair of closed subgroups of $G$.
For the cases where $G$ is a real linear reductive Lie group, 
T.~Kobayashi [Math.~Ann.~'89, ~ J.~Lie Theory '96] established a criterion for properness of 
the $L$-action on the homogeneous space $G/H$ 
in terms of Cartan's KAK-decomposition of $G$.
In this paper, 
we show that a similar theorem 
also holds 
if $G$ is a locally-compact group admitting a suitable isometric action on a metric space,
and give a proof of Kobayashi's criterion in terms of CAT(0) metric geometry on non-compact Riemannian symmetric spaces.
\end{abstract}

\section{Introduction}\label{section:introduction}

Let $G$ be a locally-compact (Hausdorff) group and $(H_1,H_2)$ a pair of closed subgroups of $G$.
The purpose of this paper is to give a 
 criterion for properness of the $H_2$-action on the homogeneous space $G/{H_1}$ in the cases where $G$ admits a suitable isometric action on a metric space.
In particular, we also give a proof 
 of T.~Kobayashi's properness criterion in \cite{Kobayashi89,Kobayashi96} from the viewpoint of CAT(0) metric geometry on non-compact Riemannian symmetric spaces.

First, we recall that a continuous action of a locally-compact group $H$
on a locally-compact Hausdorff space $X$ is said to be proper 
if for any compact subset $D$ of $X$, 
the closed subset 
\[
H_D := \{ h \in H \mid (h \cdot D) \cap D \neq \emptyset \}
\]
of $H$ is compact.

We motivated our work in the following research theme:
\begin{itemize}
    \item For a given locally-compact group $G$, study pairs $(H_1,H_2)$ of closed subgroups of $G$ such that the $H_2$-action on the homogeneous space $G/{H_1}$ is proper (or equivalently, the $H_1$-action on the homogeneous space $G/{H_2}$ is proper).
\end{itemize}

It should be noted that for a pair of closed subgroups $(H_1,H_2)$ of $G$, if one of $H_1$ and $H_2$ is compact, 
then the $H_2$-action on $G/{H_1}$ should be proper.
However, in the cases where $H_1$ and $H_2$ are both non-compact, 
the action is not needed to be proper. 
For an extremal example, 
if we take $(G,H_1) = (SO_0(n + 1,1), SO_0(n,1))$,
then for any non-compact closed subgroup $H_2$ of $G$,
the $H_2$-action on $G/{H_1}$ fails to be proper (Calabi--Markus phenomenon \cite{CalabiMarkus1962}). 

A systematic investigation of the theme above for non-compact closed subgroups $(H_1,H_2)$ in a reductive group $G$ was initiated by T.~Kobayashi's works \cite{Kobayashi89,Kobayashi92Fuji,Kobayashi1992necessary} in the 1980s, 
as studies of discontinuous groups and Clifford--Klein forms for homogeneous spaces $G/H$ of reductive type, and has been developed by many researchers in various approaches (see summaries \cite{Kobayashi-unlimit} and \cite[Section 4]{Kobayashi22conjecture} given by T.~Kobayashi for the details of discontinuous groups and Clifford--Klein forms).

In this paper, 
as one of the fundamental problems in the research theme above, 
we focus on the following problem:

\begin{problem}
For a given locally-compact group $G$ and a pair $(H_1,H_2)$ of closed subgroups of $G$, 
find a good criterion for properness of the $H_2$-action on the homogeneous space $G/{H_1}$.
\end{problem}

It should be emphasized that to prove the properness of the $H_2$-action on $G/{H_1}$, as we will see in Section \ref{subsection:properinG}, we need to check that the subset 
\[
H_1 \cap (D \cdot H_2 \cdot D^{-1})
\]
of $G$ is compact for any compact subset $D$ of $G$.
Such the problem might not be easy in many situations.

Now, for real linear reductive Lie group $G$, we state Kobayashi's properness criterion as follows:

\begin{theorem}[Kobayashi {\cite{Kobayashi89,Kobayashi96}}; see also Theorem \ref{theorem:Kobayashi96} for the detailed version]
\label{theorem:Kobayashi96intro}
Let $G$ be a real linear reductive Lie group 
and we fix a Cartan's KAK-decomposition $G = KAK$.
The Lie algebra of $A$ is denoted by $\mathfrak{a}$.
Then for a pair $(H_1,H_2)$ of closed subgroups of $G$, 
the $H_2$-action on $G/{H_1}$ is proper if and only if 
the intersection 
\[
\mathfrak{a}(H_1) \cap (\mathfrak{a}(H_2) + D)
\]
is compact for any compact subset $D$ of $\mathfrak{a}$,
where we put 
\[
\mathfrak{a}(H) := \{ X \in \mathfrak{a} \mid (K \cdot (\exp X) \cdot K) \cap H \neq \emptyset \} \subset \mathfrak{a}
\]
for each non-empty subset $H$ of $G$.
\end{theorem}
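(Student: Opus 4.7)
The plan is to deduce Theorem \ref{theorem:Kobayashi96intro} as a concrete instance of the general metric-geometric properness theorem established earlier in the paper, applied to the canonical isometric $G$-action on its associated Riemannian symmetric space. I set $X := G/K$ equipped with the $G$-invariant Riemannian metric $d$ induced by a positive multiple of the Killing form, and write $o := eK$. Since $G$ is real linear reductive, $(X,d)$ is a CAT(0) space on which $G$ acts properly, isometrically, and cocompactly. The Cartan decomposition $G = K \exp(\overline{\mathfrak{a}^+}) K$ yields a Cartan projection $\mu \colon G \to \overline{\mathfrak{a}^+}$, and a direct computation gives $d(o, g o) = \|\mu(g)\|$ together with the identity
\[
\mathfrak{a}(H) = W \cdot \mu(H)
\]
for every non-empty $H \subset G$, where $W$ is the little Weyl group acting on $\mathfrak{a}$. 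Geometrically, $\mu(g)$ is the vectorial $\mathfrak{a}$-distance from $o$ to $go$ along the flat $\exp(\mathfrak{a}) o$ through $o$.

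Next I would translate the properness condition into the asymptotic geometry of $X$. Since $K$ is compact, a subset of $G$ is relatively compact if and only if its $\mu$-image is bounded in $\mathfrak{a}$; hence the $H_2$-action on $G/H_1$ is proper if and only if $\mu(H_1 \cap D H_2 D^{-1})$ is bounded for every compact $D \subset G$. The technical heart of the argument is a coarse-Lipschitz estimate for $\mu$ under compactly-bounded perturbations: there should exist $C(D) > 0$ such that
\[
\|\mu(d_1 g d_2) - \mu(g)\| \leq C(D) \quad \text{for all } d_1, d_2 \in D,\ g \in G.
\]
In the CAT(0) framework this is a consequence of the $\overline{\mathfrak{a}^+}$-valued triangle inequality for the vectorial distance $d_\mathfrak{a}$ on $X$, itself a manifestation of Kostant's convexity theorem, combined with the elementary bound $d(o, d_i o) \leq \mathrm{diam}(D o)$.

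Combining these ingredients yields the equivalence: $\mu(H_1 \cap D H_2 D^{-1})$ is bounded for every compact $D \subset G$ if and only if $\mathfrak{a}(H_1) \cap (\mathfrak{a}(H_2) + D')$ is bounded—and hence compact, since $\mathfrak{a}(H)$ is closed in $\mathfrak{a}$ whenever $H$ is closed in $G$—for every compact $D' \subset \mathfrak{a}$, which is exactly Kobayashi's condition. The main obstacle I anticipate is extracting the coarse-Lipschitz control of $\mu$ from pure CAT(0) data: classical proofs lean on direct Lie-theoretic computations, whereas the present viewpoint must derive it from non-positive curvature of $X$, the convexity of displacement functions along flats, and the vectorial triangle inequality. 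Once this is in place, the remainder of the argument is a routine translation between the metric-geometric and algebraic formulations.
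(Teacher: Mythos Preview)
Your argument is essentially correct and follows the classical route to Kobayashi's criterion (close to Benoist's treatment and to Kobayashi's own in \cite{Kobayashi96}): reduce everything to the Cartan projection $\mu$ and invoke a coarse-Lipschitz estimate $\|\mu(d_1 g d_2) - \mu(g)\| \le C(D)$, which amounts to the $\overline{\mathfrak{a}^+}$-valued triangle inequality / Kostant convexity. This works.

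However, it is not what the paper does, and---despite your opening sentence---your argument does not actually instantiate the paper's general Theorem~\ref{theorem:main_section}. The paper's whole point is to \emph{avoid} the Kostant-type estimate. Section~\ref{section:KobayashiCriterion} proceeds differently: it observes that $M = G/K$ is a geodesically complete CAT(0) space and that for every $\xi \in \partial M$ the stabiliser $P_\xi$ (a parabolic subgroup, up to the Euclidean factor) acts transitively on $M$; by Corollary~\ref{corollary:Ptransitive} this yields Property~(S) for the $G$-space $M$, and then Theorem~\ref{theorem:main_section} applies with $\Sigma = \pi(A) \cong \mathfrak{a}$, which is a section for the polar $K$-action on $M$. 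The only CAT(0) input the paper uses is Lemma~\ref{lemma:asymptoticCAT0} (asymptotic rays stay within bounded distance of one another), not the vectorial triangle inequality.

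What each approach buys: yours is shorter for the symmetric-space case and rests on a single sharp structural fact (Kostant convexity), but that fact is exactly the ``Lie-theoretic computation'' the paper wants to replace. The paper's route is softer---it isolates an axiomatic Property~(S) verifiable from boundary geometry alone---and therefore transfers verbatim to other settings (Bruhat--Tits buildings, Cartan motion groups; cf.\ Remarks~\ref{remark:BenoistBuilding} and~\ref{remark:YBBF}) where a Kostant-type convexity theorem may not be readily available or may require separate work.
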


In the setting of Theorem \ref{theorem:Kobayashi96intro}, 
$\mathfrak{a}$ is just a finite-dimensional vector space, 
and $\mathfrak{a}(H_1)$ and $\mathfrak{a}(H_2)$ are both subsets of $\mathfrak{a}$.
Further, we only need to consider closed balls as compact subsets $D$ in $\mathfrak{a}$ (with respect to a fixed inner-product on $\mathfrak{a}$).
Therefore, the condition stated in Theorem \ref{theorem:Kobayashi96intro} is much easier than the original one.

In the cases where $H_1$ and $H_2$ are both reductive subgroups of $G$, 
then by taking inner-conjugations of $H_1$ and $H_2$ in $G$, 
one can choose $\mathfrak{a}(H_1)$ and $\mathfrak{a}(H_2)$ to be a finite union of linear subspaces of $\mathfrak{a}$ (see \cite{Kobayashi89} for more details),
and thus we do not need to consider compact subsets $D$ of $\mathfrak{a}$.
In particular, in such a situation, 
the $H_2$-action on $G/H_1$ is proper 
if and only if the $H_2$-action on $G/{H_1}$ has the property (CI), that is the isotropy subgroup of $H_2$ is compact at any point in $G/{H_1}$.

Kobayashi's properness criterion is a fundamental tool to study discontinuous groups and Clifford--Klein forms of homogeneous spaces $G/H$ of reductive type.
As one of important applications, 
a criterion for the Calabi--Markus phenomenon on $G/H$ was given in \cite[Corollary 4.3]{Kobayashi89},
that is, he showed that 
a homogeneous space $G/H$ of reductive type
admits an infinite discontinuous group if and only if  $\rank_\R G > \rank_\R H$.
Further, one can find plenty of applications of Kobayashi's properness criterion by many researchers (for example, \cite{BochenskiTralle2015ahyp, Kannaka2021linear, Kannaka2021quantitative, Kassel2008corankone, Kassel12, Kobayashi1998deformation,KobayashiYoshino05, 
Morita2022Cartan, OhWette2000new, Okuda13,Salein2000,Tojo2019classifcation}).

It should be remarked that Y.~Benoist \cite{Benoist96} also proved a claim which is essentially the same as Theorem \ref{theorem:Kobayashi96intro} 
inspired by Kobayashi's earlier work in \cite{Kobayashi89}.
Furthermore, he also gave a similar criterion for the cases where $G$ is a semisimple algebraic group over a non-Archimedean local fields (See also Remark \ref{remark:BenoistBuilding}).

We also give a remark as follows:
Let $K$ be a closed Lie subgroup of 
the orthogonal group $O(n)$,
and define the semi-direct product group $G := K \ltimes \R^n$.
Then some types of properness criterions 
are given by T.~Yoshino \cite{Yoshino2007CartanMotion} for closed subgroups of $G$ in the cases where $G$ is a Cartan motion group associated to a real reductive Lie group, 
and by A.~Baklouti, S.~Bejar and R.~Fendri \cite{BakloutiBejarFendri2023CptExtRn} for general $G = K \ltimes \R^n$ and pairs of connected or discrete subgroups of $G$ (see also Remark \ref{remark:YBBF} in Section \ref{subsection:main}).

As other situations, studies of relationships between properness and the property (CI) for pairs of closed subgroups of solvable or nilpotent Lie groups can be found in \cite{BakloutiKhlif2005ExpSolv, BakloutiKhlif2007weak, Lipsman1995proper, Nasrin2001twostep,Yoshino2004fourdim, Yoshino2005counterLips, Yoshino2007threestep}.

We note that the proofs of Theorem \ref{theorem:Kobayashi96} given by Kobayashi in \cite{Kobayashi89, Kobayashi96}
are based on structure theory of real linear reductive Lie groups.
The purpose of this paper is to understand 
Theorem \ref{theorem:Kobayashi96} in terms of metric geometry.

In this paper, we say that a metric space $(M,d)$ is proper if any closed ball in $M$ is compact.
As our main results, we will prove claims similar to 
Theorem \ref{theorem:Kobayashi96intro} (see Theorems \ref{theorem:main} and \ref{theorem:main_section} in Section \ref{subsection:main}), 
in the cases where $G$ is a locally-compact group admitting a proper isometric transitive action on a proper metric space $(M,d)$ with the following property:
\begin{description}
    \item[Property (S)] 
    For each $r \geq 0$, there exists $\tau(r) \geq 0$ such that for any $p_0,p,q \in M$ with $d(p,p_0) \leq r$, one can choose an isometry $g \in G$ with $g \cdot p = p_0$ and $d(q,g \cdot q) \leq \tau(r)$.
\end{description}

Roughly, Property (S) on the $G$-space $M$ stated above means that 
for any triple $p_0,p,q$ of points in $M$, 
one can choose a point $q_0$ in $M$ 
such that the segments $pq$ and $p_0q_0$ are $G$-conjugate to each other 
and the distance $d(q,q_0)$ is bounded in some sense. 

We will give a sufficient condition on a metric $G$-space $M$ for Property (S) in terms of geometry of CAT(0) spaces (see Section \ref{section:sufficientP}).
Furthermore, Theorem \ref{theorem:Kobayashi96intro}
can be explained in terms of CAT(0) metric geometry on Riemannian symmetric spaces without compact factors (see Section \ref{section:KobayashiCriterion}).

\subsection{Organization of the paper}

This paper is organized as follows:
In Section \ref{section:KobayashiMain}, 
we set up our notations of proper pairs and HBI-pairs.
Further, we state Kobayashi's properness criterion in the form of \cite{Kobayashi96} and our main results.
In Section \ref{section:nonexpanding}, 
we consider non-expanding maps between proper metric spaces and study the relationship between HBI-pairs via the map.
In the same section, we also discuss that for an isometric action on a proper metric space of a compact group, 
the space of orbits can be considered as a proper metric space 
and the quotient map is non-expanding.
Our main theorems \ref{theorem:main} and \ref{theorem:main_section} will be proved in Section \ref{section:proof}.
In Section \ref{section:sufficientP}, 
a sufficient condition on a metric homogeneous space 
for Property (S) is given in terms of CAT(0) spaces.
Finally, in Section \ref{section:KobayashiCriterion}, 
we discuss that Kobayashi's properness criterion (Theorem \ref{theorem:Kobayashi96}) can be explained by combining Theorem \ref{theorem:main_section} with some facts for metric geometry on Riemannian symmetric spaces.

\section{Kobayashi's properness criterion and main results of this paper}\label{section:KobayashiMain}

In this section, we set up our terminologies and state Kobayashi's properness criterion in the form of  \cite{Kobayashi96} and the main results of this paper.

\subsection{Proper pairs in locally-compact groups}\label{subsection:properinG}

To state Kobayashi's properness criterion given in \cite{Kobayashi96}, 
we shall fix our terminologies as below:
Let $G$ be a locally-compact group.
For a subset $H$ of $G$ and a compact subset $D$ of $G$,
we put 
\begin{align*}
\overline{N}_D(H) &:= D \cdot H \cdot D^{-1} \\
    &= \{ d_1 \cdot h \cdot d_2^{-1} \in G \mid d_1,d_2 \in D, h \in H \}.
\end{align*}
For two non-empty subsets $H_1$ and $H_2$ of $G$, 
we denote by $H_1 \sim H_2$ if there exists a compact subset $D$ of $G$ such that 
\[
H_1 \subset \overline{N}_D(H_2) \text{ and }
H_2 \subset \overline{N}_D(H_1). 
\]
Then ``$\sim$'' defines an equivalent binary relation 
on the set $\mathcal{P}^\times(G)$ of all non-empty subsets of $G$.
The equivalent class of $H \in \mathcal{P}^\times(G)$ will be denoted by $[H]$,
and the quotient set of $\mathcal{P}^\times(G)$ by the equivalent relation $\sim$ is written as $[\mathcal{P}^\times(G)]$.

The following proposition is pointed out by Kobayashi \cite{Kobayashi89,Kobayashi92Fuji}:

\begin{proposition}
Let $(H_1,H_2)$ be a pair of closed subgroups of $G$.
Then the following conditions on $(H_1,H_2)$ are equivalent: 
\begin{enumerate}
    \item The $H_2$-action on the homogeneous space $G/{H_1}$ is proper. 
    \item The $H_1$-action on the homogeneous space $G/{H_2}$ is proper.
    \item The $(H_1 \times H_2)$-action on the group manifold $G = (G \times G)/{(\diag G)}$ is proper, 
    where $\diag G := \{ (g,g) \mid g \in G \}$.
    \item For any $H_1' \in [H_1]$ and any $H_2' \in [H_2]$, the intersection $H_1' \cap H_2'$ is relatively compact in $G$.
\end{enumerate}    
\end{proposition}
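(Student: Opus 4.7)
\emph{Proof plan.} The cornerstone of the argument is the following elementary reformulation of properness, which I would establish first: the $H_2$-action on $G/H_1$ is proper if and only if, for every compact subset $D$ of $G$, the intersection $H_2 \cap \overline{N}_D(H_1)$ is relatively compact in $G$. This is obtained by unwinding the definition via the projection $\pi \colon G \to G/H_1$, using that $\pi$ sends compact sets to compact sets, that every compact subset of $G/H_1$ lifts to a compact subset of $G$ (by local compactness of $G$), and the explicit identity $\{ h \in H_2 \mid (h \cdot \pi(D)) \cap \pi(D) \neq \emptyset \} = H_2 \cap D H_1 D^{-1} = H_2 \cap \overline{N}_D(H_1)$.

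With this reformulation in hand, the remaining equivalences reduce to formal manipulations with compact conjugations. For (i) $\Leftrightarrow$ (ii): if $g = d_1 h_2 d_2^{-1} \in H_1 \cap \overline{N}_D(H_2)$ with $h_2 \in H_2$, then $h_2 = d_1^{-1} g d_2 \in H_2 \cap \overline{N}_E(H_1)$ for the compact symmetric set $E := D \cup D^{-1}$; the hypothesis (i) places $h_2$ in a compact set $K$, whence $g \in D K D^{-1}$, which is compact, and the converse is symmetric. For the equivalence with (iii), I identify $(G \times G)/\diag G \cong G$ via $(g_1, g_2)\, \diag G \mapsto g_1 g_2^{-1}$, so that the $(H_1 \times H_2)$-action becomes $(h_1, h_2) \cdot g = h_1 g h_2^{-1}$; the set $\{(h_1, h_2) \in H_1 \times H_2 \mid h_1 D h_2^{-1} \cap D \neq \emptyset\}$ is contained in $(H_1 \cap \overline{N}_D(H_2)) \times (H_2 \cap \overline{N}_{D^{-1}}(H_1))$ and surjects onto each factor, so (iii) is equivalent to the simultaneous relative compactness of both factors for all compact $D$, which by the preceding step is equivalent to (i) and (ii).

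The only step requiring genuine care is the equivalence with (iv). The direction (iv) $\Rightarrow$ (i) is immediate: for any compact $D$ one has $\overline{N}_D(H_1) \sim H_1$ (witnessed by $E := D \cup D^{-1}$), so applying (iv) to $H_1' = \overline{N}_D(H_1)$ and $H_2' = H_2$ directly yields (i). For the converse (i) $\Rightarrow$ (iv), given $H_1' \sim H_1$ and $H_2' \sim H_2$, I pick compact $D_1, D_2 \subset G$ with $H_i' \subset \overline{N}_{D_i}(H_i)$; any $g \in H_1' \cap H_2'$ then admits simultaneous expressions $g = d_1 h_1 d_1'^{-1} = d_2 h_2 d_2'^{-1}$, yielding $h_2 = (d_2^{-1} d_1) h_1 (d_1'^{-1} d_2') \in H_2 \cap \overline{N}_{\tilde D}(H_1)$ with $\tilde D := D_2^{-1} D_1$ compact. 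By (i) this intersection has compact closure $K$, and hence $g \in D_2 K D_2^{-1}$, which is compact; so $H_1' \cap H_2'$ is contained in a compact set. The main obstacle is precisely this last piece of compact-set bookkeeping — verifying that the auxiliary $\tilde D$ formed from $D_1, D_2$ remains compact and reduces the question back to the cornerstone reformulation — rather than any deep structural input.
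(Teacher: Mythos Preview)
Your argument is correct. Note, however, that the paper does not supply its own proof of this proposition: it is stated as a result ``pointed out by Kobayashi'' with references to \cite{Kobayashi89,Kobayashi92Fuji}, and no proof is given in the text. So there is no in-paper proof to compare against.

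That said, the paper \emph{does} prove the closely related Proposition~\ref{proposition:LLHBI} (equivalent reformulations of ``$(H_1,H_2)$ proper in $G$'' for arbitrary non-empty subsets), and your approach matches the spirit of that argument. The paper's device there is the single-line inclusion
\[
\overline{N}_{D_1}(H_1)\cap \overline{N}_{D_2}(H_2)\ \subset\ \overline{N}_{D_1}\bigl(H_1\cap \overline{N}_{D_1^{-1}D_2}(H_2)\bigr),
\]
which packages the same ``compact-set bookkeeping'' you carry out elementwise when passing from $g=d_1h_1(d_1')^{-1}=d_2h_2(d_2')^{-1}$ to $h_2\in H_2\cap \overline{N}_{\tilde D}(H_1)$ with $\tilde D=D_2^{-1}D_1$. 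Your treatment of (iii) via the identification $(G\times G)/\diag G\cong G$, $(g_1,g_2)\diag G\mapsto g_1g_2^{-1}$, and of the cornerstone reformulation via lifting compacta through $\pi\colon G\to G/H_1$, are the standard steps one finds in Kobayashi's original papers; nothing is missing.

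One cosmetic remark: in the step (iv)$\Rightarrow$(i), your claim that $\overline{N}_D(H_1)\sim H_1$ with witness $E=D\cup D^{-1}$ is fine for nonempty $D$ (pick $d\in D$ and use $d^{-1}\in E$ to manufacture the identity), but it may read more transparently if you enlarge to $E=D\cup D^{-1}\cup\{e\}$.
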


Motivated by the fact above, 
for an arbitrary pair $(H_1,H_2)$ of non-empty subsets of $G$, 
we say that $(H_1,H_2)$ is proper in $G$, and write 
\[
H_1 \pitchfork H_2 \text{ in } G \quad  (\text{ or } [H_1] \pitchfork [H_2] \text{ in } G)
\]
if for any $H_1' \in [H_1]$ and any $H_2' \in [H_2]$, the intersection $H_1' \cap H_2'$ is relatively-compact in $G$.

Note that the following 
proposition gives necessary and sufficient conditions for a pair $(H_1,H_2)$ to be proper in $G$:

\begin{proposition}\label{proposition:LLHBI}
    Let $H_1$ and $H_2$ be non-empty subsets of $G$.
    Then the following four conditions are equivalent:
    \begin{enumerate}
        \item\label{item:LLHBI:LLHBI} The pair $(H_1,H_2)$ is proper in $G$.
        \item\label{item:LLHBI:DLDL} For any compact subset $D$ of $G$, the intersection $\overline{N}_D(H_1) \cap H_2$ is relatively compact in $G$.
        \item\label{item:LLHBI:LDLD} For any compact subset $D$ of $G$, the intersection $H_1 \cap \overline{N}_D(H_2)$ is relatively compact in $G$.
        \item\label{item:LLHBI:DLDDLD} For any compact subsets $D_1$ and $D_2$ of $G$, the intersection $\overline{N}_{D_1}(H_1) \cap \overline{N}_{D_2}(H_2)$ is relatively compact in $G$.
    \end{enumerate}
\end{proposition}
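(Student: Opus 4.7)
The plan is to establish the cyclic chain
\[
(\ref{item:LLHBI:LLHBI}) \Longrightarrow (\ref{item:LLHBI:DLDDLD}) \Longrightarrow (\ref{item:LLHBI:DLDL}),\ (\ref{item:LLHBI:LDLD}) \Longrightarrow (\ref{item:LLHBI:LLHBI}),
\]
noting that the implications $(\ref{item:LLHBI:DLDDLD}) \Rightarrow (\ref{item:LLHBI:DLDL})$ and $(\ref{item:LLHBI:DLDDLD}) \Rightarrow (\ref{item:LLHBI:LDLD})$ are immediate by specializing one of the compact sets $D_1, D_2$ to the singleton $\{e\}$. The substantive content lies in the remaining two implications, both of which rest on one preliminary observation: for any non-empty $H \subset G$ and any compact $D \subset G$, one has $\overline{N}_D(H) \sim H$. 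Taking $D' := D \cup D^{-1}$ (which is compact), the inclusion $\overline{N}_D(H) \subset \overline{N}_{D'}(H)$ is trivial, while for the reverse one picks any $d \in D$ and writes $h = d^{-1}(dhd^{-1})d$ to see $H \subset \overline{N}_{D'}(\overline{N}_D(H))$.

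For $(\ref{item:LLHBI:LLHBI}) \Rightarrow (\ref{item:LLHBI:DLDDLD})$, I would simply remark that the preliminary observation guarantees $\overline{N}_{D_i}(H_i) \in [H_i]$ for $i = 1,2$, so the properness of $(H_1, H_2)$ in $G$ immediately yields that $\overline{N}_{D_1}(H_1) \cap \overline{N}_{D_2}(H_2)$ is relatively compact.

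The main work is in showing $(\ref{item:LLHBI:DLDL}) \Rightarrow (\ref{item:LLHBI:LLHBI})$ (the implication $(\ref{item:LLHBI:LDLD}) \Rightarrow (\ref{item:LLHBI:LLHBI})$ being symmetric). Let $H_1' \in [H_1]$ and $H_2' \in [H_2]$; by definition of $\sim$ and by enlarging to a common compact set, we can choose a single compact $D \subset G$ with $H_1' \subset \overline{N}_D(H_1)$ and $H_2' \subset \overline{N}_D(H_2)$. Given $x \in H_1' \cap H_2'$, write $x = d_1 h_1 d_1'^{-1} = d_2 h_2 d_2'^{-1}$ with $d_i, d_i' \in D$ and $h_i \in H_i$. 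The key manipulation is
\[
h_2 = d_2^{-1} x d_2' = d_2^{-1}(d_1 h_1 d_1'^{-1}) d_2' \in (D^{-1}D)\, H_1\, (D^{-1}D) = \overline{N}_{D^{-1}D}(H_1),
\]
so $h_2 \in \overline{N}_{D^{-1}D}(H_1) \cap H_2$. By hypothesis $(\ref{item:LLHBI:DLDL})$, the closure $K$ of this set is compact, hence $x = d_2 h_2 d_2'^{-1} \in D \cdot K \cdot D^{-1}$, a compact set independent of $x$. Therefore $H_1' \cap H_2'$ is relatively compact, as required.

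I do not expect a serious obstacle: the equivalences are a matter of bookkeeping of compact conjugations, and once the preliminary remark $\overline{N}_D(H) \sim H$ is in place, everything reduces to the one symmetric inclusion above. The only point demanding care is the symmetrization $D \mapsto D \cup D^{-1}$ when reconciling the two-sided nature of $\overline{N}_D$ with the one-sided compact set appearing in the definition of $\sim$, and tracking that $D^{-1}D$ (or products such as $D_1 D_2$) remains compact.
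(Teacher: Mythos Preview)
Your proof is correct and follows essentially the same approach as the paper. The paper organizes the implications slightly differently---it routes (\ref{item:LLHBI:DLDL}) and (\ref{item:LLHBI:LDLD}) through (\ref{item:LLHBI:DLDDLD}) via the inclusions $\overline{N}_{D_1}(H_1) \cap \overline{N}_{D_2}(H_2) \subset \overline{N}_{D_2}\bigl(\overline{N}_{D_2^{-1}D_1}(H_1) \cap H_2\bigr)$ rather than going directly to (\ref{item:LLHBI:LLHBI})---but your key manipulation $h_2 = d_2^{-1}(d_1 h_1 d_1'^{-1})d_2' \in \overline{N}_{D^{-1}D}(H_1)$ is exactly the content of that inclusion specialized to $D_1 = D_2 = D$.
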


\begin{proof}[Proof of Proposition \ref{proposition:LLHBI}]
The arguments in the proof of Proposition \ref{proposition:CI} also work to prove Proposition \ref{proposition:LLHBI} as below.
The equivalence \eqref{item:LLHBI:LLHBI} $\Leftrightarrow$ \eqref{item:LLHBI:DLDDLD}, 
the implications 
\eqref{item:LLHBI:LLHBI} $\Rightarrow$ \eqref{item:LLHBI:DLDL}
and 
\eqref{item:LLHBI:LLHBI} $\Rightarrow$ \eqref{item:LLHBI:LDLD}
are all easy to prove.
The implications
\eqref{item:LLHBI:DLDL} $\Rightarrow$ \eqref{item:LLHBI:DLDDLD}
and 
\eqref{item:LLHBI:LDLD} $\Rightarrow$ \eqref{item:LLHBI:DLDDLD}
comes immediately from 
the inclusions
\begin{align*}
\overline{N}_{D_1}(H_1) \cap \overline{N}_{D_2}(H_2) &\subset \overline{N}_{D_1}(H_1 \cap \overline{N}_{D_1^{-1} \cdot D_2}(H_2)), \\
\overline{N}_{D_1}(H_1) \cap \overline{N}_{D_2}(H_2) &\subset \overline{N}_{D_2}(\overline{N}_{D_2^{-1} \cdot D_1}(H_1) \cap H_2)
\end{align*}
for compact subsets $D_1$ and $D_2$ of $G$, respectively.
\end{proof}

For each non-empty subset $H$ of $G$, 
let us define 
\begin{align*}
\pitchfork(H:G) &:= \{ H' \in \mathcal{P}^\times(G) \mid 
H \pitchfork H' \text{ in } G \}, \\
\pitchfork([H]:G)
&:= \{ \mathcal{H}' \in [\mathcal{P}^\times(G)] \mid  [H] \pitchfork \mathcal{H}' \text{ in } G \}. 
\end{align*}

In the cases where $G$ is $\sigma$-compact, 
that is, $G$ is a union of a countable family of compact subsets of $G$, 
the following holds:

\begin{proposition}[Yoshino \cite{Yoshino2007duality}]\label{proposition:LHHBIdual}
    Suppose that $G$ is $\sigma$-compact.
    Then for two non-empty subsets $H_1$ and $H_2$ of $G$, 
    the three conditions below are equivalent:
    \begin{enumerate}
        \item \label{item:LHHBIdual:sim} $H_1 \sim H_2$ in $G$.
        \item \label{item:LHHBIdual:dual} $\pitchfork(H_1:G) = \pitchfork(H_2:G)$.
        \item \label{item:LHHBIdual:dualsim} $\pitchfork([H_1]:G) = \pitchfork([H_2]:G)$.
    \end{enumerate}
\end{proposition}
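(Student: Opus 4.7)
The plan is to first dispense with the formal implications and reduce to the nontrivial direction. By the very definition of $\pitchfork$, the property ``$H \pitchfork H'$ in $G$'' depends only on the $\sim$-classes $[H]$ and $[H']$; in particular, (i) $\Rightarrow$ (ii) is immediate. The equivalence (ii) $\Leftrightarrow$ (iii) then follows from the same invariance together with the fact that $\pitchfork([H_i]:G)$ is the image of $\pitchfork(H_i:G)$ under the canonical quotient $\mathcal{P}^\times(G) \to [\mathcal{P}^\times(G)]$. So everything reduces to the implication (ii) $\Rightarrow$ (i), which is where the $\sigma$-compactness of $G$ is essential.

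For (ii) $\Rightarrow$ (i), I would argue by contrapositive. Assume $H_1 \not\sim H_2$; by the symmetric roles of $H_1$ and $H_2$ in the definition of $\sim$, I may suppose there is no compact $D \subset G$ with $H_1 \subset \overline{N}_D(H_2)$. The aim is to exhibit a witness $H' \in \pitchfork(H_2:G) \setminus \pitchfork(H_1:G)$. Using $\sigma$-compactness, fix an exhaustion $G = \bigcup_{n \ge 1} K_n$ with $K_n \subset K_{n+1}$ compact and $e \in K_n$. A direct check shows $H_2 \cup \{e\} \sim H_2$ (take $D = \{e, h_0\}$ for any fixed $h_0 \in H_2$); after replacing $H_2$ by $H_2 \cup \{e\}$ the standing hypothesis is preserved, and we additionally have $e \in H_2$, so $K_n \subset \overline{N}_{K_n}(H_2)$ for each $n$. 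The hypothesis then yields $H_1 \setminus \overline{N}_{K_n}(H_2) \neq \emptyset$ for every $n$; pick $h_n$ in it and set $H' := \{h_n : n \ge 1\}$.

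Two verifications close the argument. First, since $h_n \notin \overline{N}_{K_n}(H_2) \supset K_n \supset K_m$ for all $n \ge m$, the set $H'$ meets each $K_m$ in a finite set, hence is closed, discrete, and infinite in $G$, so not relatively compact; as $H' \subset H_1$, this gives $H' \notin \pitchfork(H_1:G)$. Second, any compact $D \subset G$ sits inside some $K_N$, and for $n \ge N$ we have $h_n \notin \overline{N}_{K_n}(H_2) \supset \overline{N}_D(H_2)$, so $H' \cap \overline{N}_D(H_2) \subset \{h_1, \dots, h_{N-1}\}$ is finite, hence relatively compact. Proposition \ref{proposition:LLHBI} then yields $H' \in \pitchfork(H_2:G)$, contradicting (ii).

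The main difficulty is constructing $H'$ so that the two properties above hold simultaneously: $H'$ must fail to be relatively compact (to escape $\pitchfork(H_1:G)$) while meeting every compact thickening of $H_2$ in a relatively compact set (to lie in $\pitchfork(H_2:G)$). The trick of absorbing the identity into $H_2$ collapses these two requirements into the single inductive condition $h_n \notin \overline{N}_{K_n}(H_2)$, and the exhaustion by $\{K_n\}$ -- which is precisely where $\sigma$-compactness is used -- lets one such sequence handle all compact subsets of $G$ at once.
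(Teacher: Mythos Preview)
Your proof is correct and follows essentially the same route as the paper's: both argue the contrapositive of (ii) $\Rightarrow$ (i) by choosing, along a compact exhaustion of $G$, points $h_n \in H_1 \setminus \overline{N}_{K_n}(H_2)$ and taking $H' = \{h_n\}$ as the separating witness. The only cosmetic difference is that you adjoin $e$ to $H_2$ to force $K_n \subset \overline{N}_{K_n}(H_2)$ (and hence $h_n \notin K_n$ directly), whereas the paper instead takes the $D_n$ to be neighborhoods of the identity with $D_n \subset D_{n+1}^\circ$ and observes that any compact set lies in some $\overline{N}_{D_k}(H_2)$; both devices serve the same purpose of showing $H'$ escapes every compact set.
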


\begin{proof}[Proof of Proposition \ref{proposition:LHHBIdual}]
    The implication \eqref{item:LHHBIdual:sim} $\Rightarrow$ \eqref{item:LHHBIdual:dual} 
    and the equivalence \eqref{item:LHHBIdual:dual} $\Leftrightarrow$ \eqref{item:LHHBIdual:dualsim} are  both trivial.
    We shall prove the contraposition of the implication \eqref{item:LHHBIdual:dual} $\Rightarrow$ \eqref{item:LHHBIdual:sim}.
    Assume that $H_1 \not \sim H_2$.
    Without loss of the generalities, one can suppose that 
    \[
    H_1 \not \subset \overline{N}_{D}(H_2)
    \]
    for any compact subset $D$ of $G$.
    Since $G$ is $\sigma$-compact and locally-compact, as is well-known that one can find a sequence $\{ D_n \}_{n \in \Z_{\geq 0}}$ of compact subsets of $G$ satisfying that 
    \begin{itemize}
        \item $D_n$ is a neighborhood of the unit of $G$ for each $n$, 
        \item $D_n \subset D_{n+1}^{\circ}$ for each $n$, where $D_{n+1}^{\circ}$ denotes the interior of $D_{n+1}$ in $G$, and 
        \item $G = \bigcup_n D_n$.
    \end{itemize}
    Note that for any compact subset $D$ of $G$, there exists $k \in \Z_{\geq 0}$ with $D \subset D_k$ and $D \subset \overline{N}_{D_k}(H_2)$.
    Let us fix a point 
    \[
    g_n \in H_1 \setminus \overline{N}_{D_n}(H_2)
    \]
    for each $n \in \Z_{\geq 0}$, and define $H' := \{ g_n \mid n \in \Z_{\geq 0} \}$.
    Then the intersection 
    \[
    H' \cap H_1 = H' 
    \]
    is not relatively compact in $G$ since $H' \not \subset \overline{N}_{D_k}(H_2)$ for any $k \in \Z_{\geq 0}$.
    Therefore, the pair $(H_1,H')$ is not proper in $G$.
    On the other hand, for any compact subset $D$ of $G$, 
    the intersection $H' \cap \overline{N}_{D}(H_2)$ is compact since 
    \[
    H' \cap \overline{N}_{D}(H_2) \subset H' \cap \overline{N}_{D_k}(H_2) = \{ g_k \mid k \leq n \}.
    \]
    for some $k \in \Z_{\geq 0}$ with $D \subset D_k$.
    Thus, the pair $(H_2,H')$ is proper in $G$ by Proposition \ref{proposition:LLHBI}.
    This proves $\pitchfork(H_1:G) \neq \pitchfork(H_2:G)$.
\end{proof}

We also remark that 
the following proposition
gives a sufficient condition for $\sigma$-compactness of $G$:

\begin{proposition}\label{proposition:metric_sigma_compact}
Let $G$ be a locally-compact group.
If $G$ admits a continuous proper isometric action on a non-empty proper metric space $(M,d)$, then $G$ is $\sigma$-compact.
\end{proposition}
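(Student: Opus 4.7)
The plan is to exhibit $G$ as a countable union of compact subsets built from closed balls in $M$ together with the properness of the action.

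First, I would fix a base point $p_0 \in M$, which exists since $M$ is non-empty. For each $n \in \Z_{\geq 0}$, set $K_n := \{ p \in M \mid d(p_0,p) \leq n \}$, which is compact by the hypothesis that $(M,d)$ is a proper metric space. Clearly $M = \bigcup_n K_n$, and in particular every $G$-orbit point $g \cdot p_0$ lies in some $K_n$.

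Next, define
\[
G_n := \{ g \in G \mid g \cdot p_0 \in K_n \} = \{ g \in G \mid d(p_0, g \cdot p_0) \leq n \}.
\]
Since the action $G \times M \to M$ is continuous, the orbit map $g \mapsto g \cdot p_0$ is continuous, and hence $G_n$ is closed in $G$. Moreover, for any $g \in G_n$ we have $g \cdot p_0 \in K_n$ and $p_0 \in K_n$, so $g \cdot p_0 \in (g \cdot K_n) \cap K_n$, giving the inclusion
\[
G_n \subset G_{K_n} := \{ g \in G \mid (g \cdot K_n) \cap K_n \neq \emptyset \}.
\]
By the properness of the $G$-action on $M$, the set $G_{K_n}$ is compact, so $G_n$, being a closed subset of a compact set, is itself compact.

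Finally, for any $g \in G$ the distance $d(p_0, g \cdot p_0)$ is finite, so $g \in G_n$ for some $n$. Therefore $G = \bigcup_{n \in \Z_{\geq 0}} G_n$ is a countable union of compact sets, proving that $G$ is $\sigma$-compact. There is no real obstacle here; the argument is a direct combination of the definitions of a proper metric space and a proper action, and the main thing to be careful about is pinning down that $G_n$ is closed (which uses continuity of the action) before invoking properness to get its compactness.
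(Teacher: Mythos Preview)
Your proof is correct and follows essentially the same approach as the paper: both fix a base point, pull back the closed balls of integer radius under the orbit map, and observe that these preimages are compact and exhaust $G$. The only difference is cosmetic---the paper simply asserts that the orbit map $\pi$ is proper and concludes $D_n := \pi^{-1}(\overline{N}_n(*))$ is compact, whereas you spell this out by showing $G_n$ is closed and contained in the compact set $G_{K_n}$ coming directly from the definition of a proper action.
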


\begin{proof}[Proof of Proposition \ref{proposition:metric_sigma_compact}]
Fix a point $*$ of $M$ and define 
\[
\pi : G \rightarrow M, ~ g \mapsto g \cdot *.
\]
Then the map $\pi$ is proper.
Since $d$ is a proper metric on $M$, 
for each $r \geq 0$, 
the closed ball $\overline{N}_r(*)$ is compact,
and thus $D_r := \pi^{-1}(\overline{N}_r(*))$ too. 
This proves that 
\[
G = \bigcup_{n \in \Z_{\geq 0}} D_n
\]
is $\sigma$-compact.
\end{proof}

\subsection{Kobayashi's properness criterion}

We shall state Kobayashi's properness criterion given in \cite{Kobayashi96}.
Let $G$ be a real linear reductive Lie group and 
$\theta$ a Cartan involution of $G$.
We write $K := G^\theta$ for the fixed point subgroup of $G$ by $\theta$, which is a maximal compact subgroup of $G$.
For the Lie algebra $\mathfrak{g}$ of $G$, 
the Cartan decomposition on $\mathfrak{g}$ induced by $\theta$ is denoted by $\mathfrak{g} = \mathfrak{k} + \mathfrak{p}$.
We choose a maximal abelian subspace $\mathfrak{a}$ of $\mathfrak{p}$.
Then we have a Cartan's KAK-decomposition $G = K \cdot A \cdot K$, where $A := \exp(\mathfrak{a})$ denotes the connected abelian analytic subgroup of $G$ with respect to $\mathfrak{a}$.
We denote by $W := N_K(\mathfrak{a})/{Z_K(\mathfrak{a})}$ 
the Weyl group acting on $\mathfrak{a}$.
For each non-empty subset $H$ of $G$, 
we define the non-empty $W$-invariant subset $\mathfrak{a}(H)$ of $\mathfrak{a}$ by 
\[
\mathfrak{a}(H) := \{ X \in \mathfrak{a} \mid (K \cdot \exp(X) \cdot K) \cap H \neq \emptyset \}.
\]
Then $\mathfrak{a}(H)$ is a non-empty $W$-stable subset of $\mathfrak{a}$, 
and $H \sim \exp(\mathfrak{a}(H))$ in $G$.

Let us consider $\mathfrak{a}$ as a locally-compact additive group (with respect to the standard Hausdorff topology on $\mathfrak{a}$).
On the set $\mathcal{P}^\times(\mathfrak{a})^{W}$ of all non-empty $W$-invariant subsets of $\mathfrak{a}$,
we define the equivalent relation $\sim$ 
and proper pairs in $\mathcal{P}^\times(\mathfrak{a})^{W}$ as in the sense mentioned above.
For each $C \in \mathcal{P}^\times(\mathfrak{a})^W$, we define 
\begin{align*}
    \pitchfork(C:\mathfrak{a})^{W} := \{ C' \in \mathcal{P}^\times(\mathfrak{a})^W \mid C \pitchfork C' \text{ in } \mathfrak{a} \} \subset \mathcal{P}^\times(\mathfrak{a})^W.
\end{align*}

Then Kobayashi's properness criterion in \cite{Kobayashi96} can be stated as follows:

\begin{theorem}[Kobayashi {\cite[Theorems 3.4 and 5.6]{Kobayashi96}}]\label{theorem:Kobayashi96}
Let $(H_1,H_2)$ be a pair of non-empty subsets of a real linear reductive Lie group $G$.
\begin{enumerate}[(1)]
\item \label{item:Kobayashi96:1} The pair $(H_1,H_2)$ is proper in $G$
    if and only if the pair $(\mathfrak{a}(H_1),\mathfrak{a}(H_2))$ is proper in $\mathfrak{a}$, that is, 
    for any compact subset $D$ of $\mathfrak{a}$, 
    the intersection 
    \[
    \mathfrak{a}(H_1) \cap (\mathfrak{a}(H_2) + D)
    \]
    is relatively-compact in $\mathfrak{a}$.
    \item The following four conditions on $(H_1,H_2)$ are equivalent:
\begin{enumerate}[(\textrm{2}a)]
    \item $H_1 \sim H_2$ in $G$.
    \item $\mathfrak{a}(H_1) \sim \mathfrak{a}(H_2)$ in $\mathfrak{a}$.
    \item $\pitchfork(H_1:G) =  \pitchfork(H_1:G)$
    \item $\pitchfork(\mathfrak{a}(H_1):\mathfrak{a})^{W} =  \pitchfork(\mathfrak{a}(H_2):\mathfrak{a})^{W}$.
\end{enumerate}
\end{enumerate}
\end{theorem}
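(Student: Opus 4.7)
The plan is to apply Theorem \ref{theorem:main_section} to the natural isometric action of $G$ on its Riemannian symmetric space $M := G/K$, equipped with the $G$-invariant Riemannian metric induced by the Killing form on $\mathfrak{p}$. Then $M$ is a proper CAT(0) space on which $G$ acts properly, transitively, and isometrically, so once Property (S) is verified, Theorem \ref{theorem:main_section} reduces properness of $(H_1,H_2)$ in $G$ to a properness condition on the orbits $H_i \cdot p_0$ inside $M$, where $p_0 := eK$. I expect Property (S) to follow from the sufficient condition established in Section \ref{section:sufficientP}, applied to this CAT(0) symmetric space.

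To translate this $M$-level condition into the Euclidean condition on $\mathfrak{a}$, I would exploit the KAK-decomposition. Fixing the basepoint $p_0$, the flat $\mathfrak{a} \cdot p_0 \subset M$ is totally geodesic and isometric to the Euclidean space $\mathfrak{a}$, and every $K$-orbit on $M$ meets this flat in a single $W$-orbit. Hence the quotient $K \backslash M$ is canonically identified with the metric quotient $\mathfrak{a}/W$, and by construction $\mathfrak{a}(H)$ corresponds precisely to the preimage in $\mathfrak{a}$ of the image of $H \cdot p_0$ in $K \backslash M$. Since $K$ is compact and fixes $p_0$, the framework of Section \ref{section:nonexpanding} ensures that the quotient map $M \to K \backslash M \cong \mathfrak{a}/W$ is non-expanding and that properness of pairs is preserved and detected through it. Combined with the fact that $W$ is finite and acts by orthogonal isometries on $\mathfrak{a}$, this shows that properness of the orbit pair in $M$ is equivalent to properness of $(\mathfrak{a}(H_1),\mathfrak{a}(H_2))$ as $W$-invariant subsets of $\mathfrak{a}$, proving part (\ref{item:Kobayashi96:1}). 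For part (2), the four equivalences then follow by combining part (1) with Yoshino's duality (Proposition \ref{proposition:LHHBIdual}) applied on both sides; the required $\sigma$-compactness of $G$ is supplied by Proposition \ref{proposition:metric_sigma_compact} via the $G$-action on $M$, while $\sigma$-compactness of $\mathfrak{a}$ is automatic.

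The main obstacle I anticipate is verifying Property (S) for the $G$-action on $M$: one needs a function $\tau(r)$ such that, for every triple $p_0,p,q$ with $d(p,p_0) \leq r$, some isometry $g \in G$ with $g \cdot p = p_0$ satisfies $d(q,g \cdot q) \leq \tau(r)$, with $\tau$ independent of $q$. The natural candidate is the transvection along the geodesic from $p$ to $p_0$, and a bound of the desired form should come from the CAT(0) convexity of the distance function together with the homogeneity of $M$; the delicate point is that the displacement of a transvection at a far away point $q$ must be controlled only in terms of $r$, which is exactly what the CAT(0) hypothesis of Section \ref{section:sufficientP} is designed to provide. A secondary subtlety, manageable but requiring care, is matching the quotient metric on $\mathfrak{a}/W$ with the condition on $W$-invariant subsets of $\mathfrak{a}$, which is harmless since $W$ is a finite Euclidean reflection group.
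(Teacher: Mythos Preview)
Your overall strategy coincides with the paper's: apply Theorem \ref{theorem:main_section} to $M = G/K$ with $\Sigma = \exp(\mathfrak{a}) \cdot * \cong \mathfrak{a}$, after verifying Property (S) via Section \ref{section:sufficientP}; part (2) then comes directly from Theorem \ref{theorem:main_section} \eqref{item:mainsec2}, so no separate appeal to Proposition \ref{proposition:LHHBIdual} is needed. However, your guess about the mechanism behind Property (S) is wrong and would not work. The transvection along the geodesic from $p$ to $p_0$ does \emph{not} have displacement at $q$ bounded in terms of $r$ alone: already in real hyperbolic space the displacement of such a transvection at a point $q$ far from its axis grows without bound, and CAT(0) convexity of the displacement function gives only a \emph{lower} bound, never the upper bound you need. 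The paper's actual argument (Corollary \ref{corollary:Ptransitive}) runs differently: since $M$ is geodesically complete, take the geodesic ray $c$ from $p$ \emph{through} $q$, and use that the stabilizer $P_{c(\infty)}$ of the boundary point acts transitively on $M$---this is where the Iwasawa/parabolic structure of $G$ on the noncompact factor $M_-$, together with the translations on the Euclidean factor $M_0$, is genuinely used. Choosing $g \in P_{c(\infty)}$ with $g \cdot p = p_0$, the ray $g \cdot c$ is asymptotic to $c$, and the CAT(0) asymptotic-ray estimate (Lemma \ref{lemma:asymptoticCAT0}) gives $d(c(t), g \cdot c(t)) \le d(p,p_0) \le r$ for all $t$; taking $t = d(p,q)$ yields $d(q, g \cdot q) \le r$. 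So $\tau(r) = r$ works, but the isometry $g$ is not a transvection, and the input from Lie theory is essential rather than optional.
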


\subsection{HBI-pairs in proper metric spaces}

Let $(M,d)$ be a proper metric space,
that is, $(M,d)$ is a metric space such that 
any closed ball in $M$ is compact.
Note that a subset of $M$ is bounded in $M$ if and only if relatively-compact in $M$.

In this subsection, to state our main results, 
we introduce the concepts of ``HBI-pairs'' of non-empty subsets of $M$ as below:

For each subset $C$ of $M$ and each $r \geq 0$, we define the subset $\overline{N}_r(C)$ of $M$ by
\[
\overline{N}_r(C) := \{ p \in M \mid d(p,c) \leq r \text{ for some } c \in C \}.
\]
In this paper, for two non-empty subsets $C_1$ and $C_2$ of $M$, we write $C_1 \sim C_2$ 
if there exists $r \geq 0$ such that 
\[
C_1 \subset \overline{N}_r(C_2) \text{ and } C_2 \subset \overline{N}_r(C_1).
\]
Note that $C_1 \sim C_2$ if and only if 
the Hausdorff distance between $C_1$ and $C_2$ is finite.
Then ``$\sim$'' defines 
an equivalent binary relation on the set $\mathcal{P}^\times(M)$ of all non-empty subsets of $M$.
For each non-empty subset $C$, we write $[C]$ for the equivalent class of $C$.
Furthermore, the quotient set of $\mathcal{P}^{\times}(M)$ by $\sim$ is denoted by $[\mathcal{P}^{\times}(M)]$.

Throughout this paper, 
for two non-empty subsets $C_1$ and $C_2$ of $M$,we say that the pair $(C_1,C_2)$ is \emph{HBI} in $M$ 
if for any $C_1' \in [C_1]$ and any $C_2' \in [C_2]$, 
the intersection $C_1' \cap C_2'$ is bounded in $M$.
In such a case, the pair $([C_1],[C_2])$ in $[\mathcal{P}^{\times}(M)]$ is also said to be HBI in $M$.

The proposition below gives necessary and sufficient conditions for a pair $(C_1,C_2)$ to be HBI in $M$.

\begin{proposition}\label{proposition:CI}
Let $C_1$ and $C_2$ be both non-empty subsets of $M$.
Then the following four conditions on $(C_1,C_2)$ are equivalent: 
\begin{enumerate}
    \item \label{item:CI:CI} The pair $(C_1,C_2)$ is HBI in $M$.
    \item \label{item:CI:C1NC2} For any $r \geq 0$, the intersection $C_1 \cap \overline{N}_r(C_2)$ is bounded in $M$.
    \item \label{item:CI:NC1C2} For any $r \geq 0$, the intersection $\overline{N}_r(C_1) \cap C_2$ is bounded in $M$.
    \item \label{item:CI:NC1NC2} For any $r_1,r_2 \geq 0$, the intersection $\overline{N}_{r_1}(C_1) \cap \overline{N}_{r_2}(C_2)$ is bounded in $M$.
\end{enumerate}    
\end{proposition}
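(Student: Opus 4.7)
The plan is to mirror the strategy used in the proof of Proposition \ref{proposition:LLHBI}, exploiting two elementary observations. First, for any non-empty $C \subset M$ and any $r \geq 0$, we have $\overline{N}_r(C) \sim C$, since $C \subset \overline{N}_r(C) \subset \overline{N}_r(C)$ trivially. Second, the triangle inequality yields the key inclusion
\[
\overline{N}_{r_1}(C_1) \cap \overline{N}_{r_2}(C_2) \subset \overline{N}_{r_1}\bigl(C_1 \cap \overline{N}_{r_1+r_2}(C_2)\bigr),
\]
together with its symmetric counterpart obtained by swapping the roles of $C_1$ and $C_2$. Indeed, if $p$ belongs to the left-hand side, pick $c_1 \in C_1$ with $d(p,c_1) \leq r_1$; then $c_1 \in \overline{N}_{r_1+r_2}(C_2)$ by the triangle inequality, so $c_1 \in C_1 \cap \overline{N}_{r_1+r_2}(C_2)$ and $p \in \overline{N}_{r_1}(c_1)$.

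First I would dispatch the easy implications. The equivalence \eqref{item:CI:CI} $\Leftrightarrow$ \eqref{item:CI:NC1NC2} is immediate from the first observation: given \eqref{item:CI:CI}, for any $r_1,r_2 \geq 0$ the sets $\overline{N}_{r_1}(C_1)$ and $\overline{N}_{r_2}(C_2)$ lie in $[C_1]$ and $[C_2]$ respectively, so their intersection is bounded; conversely, given \eqref{item:CI:NC1NC2} and any $C_i' \in [C_i]$, there exist $r_i \geq 0$ with $C_i' \subset \overline{N}_{r_i}(C_i)$, hence $C_1' \cap C_2' \subset \overline{N}_{r_1}(C_1) \cap \overline{N}_{r_2}(C_2)$ is bounded. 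The implications \eqref{item:CI:NC1NC2} $\Rightarrow$ \eqref{item:CI:C1NC2} and \eqref{item:CI:NC1NC2} $\Rightarrow$ \eqref{item:CI:NC1C2} follow by specialising to $r_1 = 0$ or $r_2 = 0$, since $\overline{N}_0(C) = C$.

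The remaining implications \eqref{item:CI:C1NC2} $\Rightarrow$ \eqref{item:CI:NC1NC2} and \eqref{item:CI:NC1C2} $\Rightarrow$ \eqref{item:CI:NC1NC2} are where the key inclusion does its work. Assuming \eqref{item:CI:C1NC2}, the set $C_1 \cap \overline{N}_{r_1+r_2}(C_2)$ is bounded for every $r_1,r_2 \geq 0$; since enlarging a bounded set by a closed $r_1$-neighbourhood keeps it bounded (use $\overline{N}_{r_1}(\overline{N}_R(p_0)) \subset \overline{N}_{r_1+R}(p_0)$), the inclusion above yields that $\overline{N}_{r_1}(C_1) \cap \overline{N}_{r_2}(C_2)$ is bounded, i.e.\ \eqref{item:CI:NC1NC2}. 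The implication \eqref{item:CI:NC1C2} $\Rightarrow$ \eqref{item:CI:NC1NC2} is proved symmetrically via the companion inclusion.

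I do not anticipate any genuine obstacle: the argument amounts to careful bookkeeping with the triangle inequality together with the fact that in a proper metric space ``bounded'' and ``relatively compact'' coincide, which is what allows the four conditions to be phrased in a uniform way.
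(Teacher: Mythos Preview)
Your proposal is correct and follows essentially the same approach as the paper: the paper likewise declares \eqref{item:CI:CI} $\Leftrightarrow$ \eqref{item:CI:NC1NC2} and the implications toward \eqref{item:CI:C1NC2}, \eqref{item:CI:NC1C2} to be easy, and then derives \eqref{item:CI:C1NC2} $\Rightarrow$ \eqref{item:CI:NC1NC2} and \eqref{item:CI:NC1C2} $\Rightarrow$ \eqref{item:CI:NC1NC2} from exactly the two inclusions you wrote down. Your write-up is in fact slightly more detailed than the paper's, since you spell out why the key inclusion holds via the triangle inequality.
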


\begin{proof}[Proof of Proposition \ref{proposition:CI}]
The equivalence \eqref{item:CI:CI} $\Leftrightarrow$ \eqref{item:CI:NC1NC2}, 
the implications 
\eqref{item:CI:CI} $\Rightarrow$ \eqref{item:CI:C1NC2}
and 
\eqref{item:CI:CI} $\Rightarrow$ \eqref{item:CI:NC1C2}
are all easy to prove.
The implications
\eqref{item:CI:C1NC2} $\Rightarrow$ \eqref{item:CI:NC1NC2}
and 
\eqref{item:CI:NC1C2} $\Rightarrow$ \eqref{item:CI:NC1NC2}
comes immediately from 
the inclusions
\begin{align*}
\overline{N}_{r_1}(C_1) \cap \overline{N}_{r_2}(C_2) &\subset \overline{N}_{r_1}(C_1 \cap \overline{N}_{r_1+r_2}(C_2)), \\
\overline{N}_{r_1}(C_1) \cap \overline{N}_{r_2}(C_2) &\subset \overline{N}_{r_2}(\overline{N}_{r_1+r_2}(C_1) \cap C_2)
\end{align*}
for $r_1,r_2 \geq 0$, respectively.
\end{proof}

For each non-empty subset $C$ of $M$, 
let us define 
\begin{align*}
\mathcal{HBI}(C:M) &:= \{ C' \in \mathcal{P}^\times(M) \mid 
\text{ the pair } (C,C') \text{ is HBI in } M \}, \\
\mathcal{HBI}([C]:M)
&:= \{ \mathcal{C}' \in [\mathcal{P}^\times(M)] \mid 
\text{ the pair } ([C],\mathcal{C}') \text{ is HBI in } M \}.
\end{align*}

Then, similarly to Proposition \ref{proposition:metric_sigma_compact}, 
the proposition below holds:

\begin{proposition}\label{proposition:CCdual}
    For two non-empty subsets $C_1$ and $C_2$ of $M$, 
    the following three conditions are equivalent:
    \begin{enumerate}
        \item \label{item:CCdual:sim} $C_1 \sim C_2$ in $M$.
        \item \label{item:CCdual:dual} $\mathcal{HBI}(C_1:M) = \mathcal{HBI}(C_2:M)$.             
        \item \label{item:CCdual:dualsim} $\mathcal{HBI}([C_1]:M) = \mathcal{HBI}([C_2]:M)$.
    \end{enumerate}
\end{proposition}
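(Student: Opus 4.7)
The plan is to mirror the structure of the proof of Proposition~\ref{proposition:LHHBIdual} in the metric setting, which turns out to be even simpler because the boundedness used to define ``HBI'' is already a purely metric notion (no $\sigma$-compactness analog needed). First, I would dispose of the easy implications. The equivalence \eqref{item:CCdual:dual}~$\Leftrightarrow$~\eqref{item:CCdual:dualsim} is formal: the HBI condition on $(C, C')$ depends only on $[C]$ and $[C']$, so $\mathcal{HBI}(C:M)$ is saturated under $\sim$, and is precisely the preimage of $\mathcal{HBI}([C]:M)$ under the quotient map $\mathcal{P}^\times(M) \to [\mathcal{P}^\times(M)]$. The implication \eqref{item:CCdual:sim}~$\Rightarrow$~\eqref{item:CCdual:dual} is equally immediate, since $C_1 \sim C_2$ gives $[C_1]=[C_2]$ and the HBI condition on $(C_i, C')$ depends only on $[C_i]$ and $[C']$.

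The content lies in the contrapositive of \eqref{item:CCdual:dual}~$\Rightarrow$~\eqref{item:CCdual:sim}. Assume $C_1 \not\sim C_2$; by the symmetry of $\sim$ we may assume that no $r \geq 0$ satisfies $C_1 \subset \overline{N}_r(C_2)$. For each $n \in \Z_{\geq 0}$ choose
\[
p_n \in C_1 \setminus \overline{N}_n(C_2),
\]
and set $C' := \{ p_n \mid n \in \Z_{\geq 0} \}$. I would then verify that $C'$ witnesses $\mathcal{HBI}(C_2:M) \neq \mathcal{HBI}(C_1:M)$, namely $C' \in \mathcal{HBI}(C_2:M) \setminus \mathcal{HBI}(C_1:M)$.

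For $C' \in \mathcal{HBI}(C_2:M)$, by Proposition~\ref{proposition:CI}\eqref{item:CI:NC1C2} applied to the pair $(C_2, C')$ it suffices to show that $\overline{N}_r(C_2) \cap C'$ is bounded for every $r \geq 0$. But $p_n \in \overline{N}_r(C_2)$ forces $d(p_n, C_2) \leq r$ and hence $n < r$, so $\overline{N}_r(C_2) \cap C'$ is a finite set, hence bounded. To see that $C' \notin \mathcal{HBI}(C_1:M)$, note that $C' \subset C_1$ gives $C_1 \cap C' = C'$; fixing any $c \in C_2$, the estimate $d(p_n, c) \geq d(p_n, C_2) > n$ shows that $C'$ is unbounded in $M$, so $(C_1, C')$ fails to be HBI.

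The main (mild) obstacle compared with Proposition~\ref{proposition:LHHBIdual} is that in the locally-compact-group setting one must first construct an exhausting sequence of compact neighborhoods $\{D_n\}$ of the identity, which is precisely where $\sigma$-compactness is invoked; in our metric formulation the closed neighborhoods $\overline{N}_n(C_2)$ indexed by $n \in \Z_{\geq 0}$ provide such an exhaustion for free, and in fact the argument above uses neither properness of $M$ nor any separability-type hypothesis.
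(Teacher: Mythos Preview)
Your proof is correct and follows essentially the same route as the paper's: both dispose of \eqref{item:CCdual:sim}~$\Rightarrow$~\eqref{item:CCdual:dual} and \eqref{item:CCdual:dual}~$\Leftrightarrow$~\eqref{item:CCdual:dualsim} as trivial, and for the contrapositive of \eqref{item:CCdual:dual}~$\Rightarrow$~\eqref{item:CCdual:sim} both construct the same witness $C'=\{p_n\}$ with $p_n\in C_1\setminus\overline{N}_n(C_2)$ and verify via Proposition~\ref{proposition:CI} that $(C_2,C')$ is HBI while $(C_1,C')$ is not. Your added remark that neither properness of $M$ nor any separability hypothesis is actually used is a correct observation that the paper does not make explicit.
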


\begin{proof}[Proof of Proposition \ref{proposition:CCdual}]
The implication \eqref{item:CCdual:sim} $\Rightarrow$ \eqref{item:CCdual:dual}
and the equivalence \eqref{item:CCdual:dual} $\Leftrightarrow$ \eqref{item:CCdual:dualsim} are both trivial.
    We shall prove the contraposition of the implication \eqref{item:CCdual:dual} $\Rightarrow$
    \eqref{item:CCdual:sim}. 
    Assume that $C_1 \not \sim C_2$.
    Without loss of the generalities, one can suppose that 
    \[
    C_1 \not \subset \overline{N}_r(C_2)
    \]
    for any $r \geq 0$.
    For each $n \in \Z_{\geq 0}$, one can find and do take $p_n \in C_1 \setminus \overline{N}_n(C_2)$.
    Let us put $C' := \{ p_n \mid n \in \Z_{\geq 0} \}$.
    Then the intersection 
    \[
    C' \cap C_1 = C' 
    \]
    is not bounded in $M$.
    Therefore, the pair $(C_1,C')$ is not HBI in $M$.
    However, the intersection $C' \cap \overline{N}_r(C_2)$ is finite for any $r \geq 0$, and thus the pair $(C_2,C')$ is HBI in $M$ by Proposition \ref{proposition:CI}.
    This proves $\mathcal{HBI}(C_1:M) \neq \mathcal{HBI}(C_2:M)$.
\end{proof}

\subsection{Main results}\label{subsection:main}

We shall state our results as below:
Let $(M,d)$ be a proper metric space
 and $G$ a locally-compact group acting 
continuously, properly, isometrically and transitively on $M$.
As in Section \ref{section:introduction}, 
suppose that the metric $G$-space $M$ has the following property:
\begin{description}
    \item[Property (S)] 
    For each $r \geq 0$, there exists $\tau(r) \geq 0$ such that for any $p_0,p,q \in M$ with $d(p,p_0) \leq r$, one can choose an isometry $g \in G$ with $g \cdot p = p_0$ and $d(q,g \cdot q) \leq \tau(r)$.
\end{description}

Let us fix a base point $*$ of $M$ and denote by $K$ the compact isotropy subgroup of $G$ at the point $*$ of $M$.
The space of $K$-orbits in $M$ is written as $K \backslash M$ equipped with the proper metric $d^K$ induced by the metric $d$ on $M$ (see Section \ref{subsection:Korbits} for the definition of the metric $d^K$ on $K \backslash M$).
We define the map $\mu : G \rightarrow K \backslash M$ by 
\[
\mu : G \rightarrow K \backslash M, ~ g \mapsto K \cdot (g \cdot *).
\]

One of our main results is stated below:

\begin{theorem}\label{theorem:main}
In the setting above, 
let $H_1$ and $H_2$ be both non-empty subsets of $G$.
\begin{enumerate}[(1)]
    \item \label{item:main:proper} The pair $(H_1,H_2)$ is proper in $G$ if and only if 
    the pair $(\mu(H_1),\mu(H_2))$ is HBI in $K \backslash M$, 
    that is, for any $r \geq 0$, the intersection 
    \[
    \mu(H_1) \cap \overline{N}_r(\mu(H_2))
    \]
    is bounded in the proper metric space $K \backslash M$.    
    \item \label{item:main:sim} The following four conditions on $(H_1,H_2)$ are equivalent:
    \begin{enumerate}[(\textrm{2}-a)]
        \item $H_1 \sim H_2$ in $G$.
        \item $\mu(H_1) \sim \mu(H_2)$ in $K \backslash M$.
        \item $\pitchfork(H_1:G) 
        = \pitchfork(H_2:G)$. 
        \item $\mathcal{HBI}(\mu(H_1): K \backslash M) = \mathcal{HBI}(\mu(H_2):K \backslash M)$.
    \end{enumerate}
\end{enumerate}
\end{theorem}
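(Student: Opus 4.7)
The plan is to derive Part~(1) from two lemmas, each a direct consequence of Property~(S), and then obtain Part~(2) from Part~(1) together with Propositions~\ref{proposition:LHHBIdual} and \ref{proposition:CCdual}. A preliminary remark: $\mu$ factors as $G \xrightarrow{\pi} M \to K \backslash M$, where $\pi$ is proper because the $G$-action is proper, and the quotient $M \to K \backslash M$ is proper because $K$ is compact (its fibers are compact and it is a closed map). Hence $\mu$ itself is proper, so a subset $H \subset G$ is relatively compact in $G$ if and only if $\mu(H) \subset K \backslash M$ is bounded. This correspondence is what will let me transfer boundedness in $K \backslash M$ back to relative compactness in $G$ as required by Proposition~\ref{proposition:LLHBI}.

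The first lemma, needed for ``$\Rightarrow$'' in Part~(1), will assert that for each $r \geq 0$ there exists a compact $D_r \subset G$ such that $d^K(\mu(g_1), \mu(g_2)) \leq r$ forces $g_1 \in \overline{N}_{D_r}(\{g_2\})$. Since $K$ is compact, the orbits $K g_i \cdot *$ are compact and the defining infimum is attained by some $k_1, k_2 \in K$ with $d(k_1 g_1 \cdot *, k_2 g_2 \cdot *) \leq r$. I would then apply Property~(S) with $p_0 = k_1 g_1 \cdot *$, $p = k_2 g_2 \cdot *$, $q = *$, producing $g \in G$ that sends $p$ to $p_0$ and satisfies $d(*, g \cdot *) \leq \tau(r)$, so $g$ lies in the compact set $\pi^{-1}(\overline{B}_{\tau(r)}(*))$; rewriting $g \cdot p = p_0$ modulo the stabilizer $K$ of $*$ expresses $g_1 = k_1^{-1} g k_2 g_2 k^{-1}$ for some $k \in K$, giving the desired form.

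The second lemma, needed for ``$\Leftarrow$'' and the main obstacle of the proof, will state that $d(d \cdot y, K \cdot y) \leq \tau(d(d \cdot *, *))$ for every $d \in G$ and $y \in M$. The difficulty is that a compact isometry need not have bounded displacement at points far from $*$, so taking $k_1 = k_2 = e$ in the infimum defining $d^K$ yields no uniform bound. Property~(S) will be essential here: applied with $p_0 = d^{-1} \cdot *$, $p = *$, $q = y$ it furnishes $g \in d^{-1} K$, say $g = d^{-1} k^{-1}$ with $k \in K$, satisfying $d(y, g \cdot y) \leq \tau(d(d \cdot *, *))$, and transporting this inequality by the isometry $d$ gives $d(d \cdot y, k^{-1} \cdot y) \leq \tau(d(d \cdot *, *))$ with $k^{-1} \cdot y \in K \cdot y$. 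A two-step triangle inequality (breaking at $d_1 \cdot (h \cdot *)$ and using this lemma with $y = h \cdot *$) will then produce the uniform bound $d^K(\mu(d_1 h d_2^{-1}), \mu(h)) \leq R + \tau(R)$ for every $d_1, d_2$ in any compact $D$ with $R := \sup_{d \in D} d(d \cdot *, *)$, independently of $h \in G$.

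With these lemmas, Part~(1) follows quickly: for ``$\Rightarrow$'', any $c = \mu(h_1)$ in $\mu(H_1) \cap \overline{N}_r(\mu(H_2))$ forces $h_1 \in H_1 \cap \overline{N}_{D_r}(H_2)$, which is relatively compact by Proposition~\ref{proposition:LLHBI}; for ``$\Leftarrow$'', the uniform bound yields $\mu(H_1 \cap \overline{N}_D(H_2)) \subset \mu(H_1) \cap \overline{N}_{r_D}(\mu(H_2))$, bounded by HBI, and properness of $\mu$ transfers this to relative compactness in $G$. For Part~(2), $\sigma$-compactness of $G$ via Proposition~\ref{proposition:metric_sigma_compact} combined with Proposition~\ref{proposition:LHHBIdual} gives (2-a)$\Leftrightarrow$(2-c); Proposition~\ref{proposition:CCdual} gives (2-b)$\Leftrightarrow$(2-d); and (2-c)$\Leftrightarrow$(2-d) will follow from Part~(1) by lifting any $C \subset K \backslash M$ to $H' := \mu^{-1}(C)$, using $\mu(H') = C$ (since $\mu$ is surjective onto $K \backslash M$ by transitivity) and the equivalence, supplied by Part~(1), of $H_i \pitchfork H'$ in $G$ with $\mu(H_i) \pitchfork C$ in $K \backslash M$.
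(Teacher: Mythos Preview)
Your proof is correct, and your Lemma~2 is essentially the paper's key Lemma~\ref{lemma:SNNS}, phrased pointwise: the paper shows $S^{H}(\overline{N}_r(*)) \subset \overline{N}_{\tau}(S^{H}(*))$, which unravels to exactly $d(g \cdot (h\cdot *), K \cdot (h\cdot *)) \leq \tau(r)$ whenever $d(g\cdot *,*)\leq r$. So the heart of the argument is the same.

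The routing differs. The paper does not work directly in $K\backslash M$: it first proves the equivalence between properness in $G$ and HBI of the $K$-saturated sets $S^{H_i}(*)=K\cdot H_i\cdot *$ inside $M$ (Theorems~\ref{theorem:GM} and \ref{theorem:GMstar}), and only then transfers to $K\backslash M$ via the general non-expanding-map machinery of Section~\ref{section:nonexpanding}. Your argument collapses these two steps into one by handling $\mu$ directly. Your route is shorter for Theorem~\ref{theorem:main} alone; the paper's detour through $M$ pays off because the same Section~\ref{section:nonexpanding} framework then immediately yields Theorem~\ref{theorem:main_section} for a section $\Sigma$, with no extra work.

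Two smaller remarks. First, your Lemma~1 (and hence the ``$\Rightarrow$'' direction) does not actually need Property~(S): from $d(k_1 g_1\cdot *, k_2 g_2\cdot *)\leq r$ one gets $(k_2 g_2)^{-1}(k_1 g_1)\in \pi^{-1}(\overline{N}_r(*))$ directly, so $g_1 \in K\, g_2\, \pi^{-1}(\overline{N}_r(*))$. This is how the paper handles that direction in Theorem~\ref{theorem:GM}, reserving Property~(S) solely for the converse. Second, for Part~(2) the paper links (2-a) and (2-b) directly (Theorem~\ref{theorem:GMstar}\eqref{item:GM:SNSDstar}) rather than going through (2-c)$\Leftrightarrow$(2-d) via Part~(1) as you do; both routes are valid, and your lifting $H'=\mu^{-1}(C)$ is exactly the right device.
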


In addition, let us fix $\Sigma$ as a closed subset of $M$ satisfying the following two conditions:
\begin{itemize}
    \item For each $K$-orbit $\mathcal{O}$ in $M$, the intersection $\Sigma \cap \mathcal{O}$ is non-empty.
    \item For each pair of $K$-orbits $(\mathcal{O}_1,\mathcal{O}_2)$ in $M$, 
    there exists $(p,q) \in (\Sigma \cap \mathcal{O}_1) \times (\Sigma \cap \mathcal{O}_2)$ such that $d(p,q) = d^K(\mathcal{O}_1,\mathcal{O}_2)$.
\end{itemize}
We write 
\[
\eta : \Sigma \rightarrow K \backslash M, ~ p \mapsto K \cdot p
\]
for the restriction of the quotient map, and define 
\[
\Theta : \mathcal{P}^\times(\Sigma) \rightarrow \mathcal{P}^\times(\Sigma), ~ S \mapsto \eta^{-1}(\eta(S)).
\]
Then $\Theta^2 = \Theta$ on $\mathcal{P}^\times(\Sigma)$.
Let us put 
\[
\mathcal{P}^\times(\Sigma)^{\Theta} := \{ S \in \mathcal{P}^{\times}(\Sigma) \mid \Theta(S) = S \} \subset \mathcal{P}^{\times}(\Sigma).
\]

By considering the restriction of $d$, the set $\Sigma$ itself is a proper metric space.
For each $S \in \mathcal{P}^\times(\Sigma)^{\Theta}$, 
we write
\[
\mathcal{HBI}(S:\Sigma)^{\Theta} := \{ S' \in \mathcal{P}^\times(\Sigma)^{\Theta} \mid \text{ the pair }(S,S') \text{ is HBI in } \Sigma \}.
\]

Furthermore, for each non-empty subset $H$ of $G$,
we define the non-empty $\Theta$-fixed subset $\Sigma(H)$ of $\Sigma$ by 
\begin{align*}
\Sigma(H) &:= \{ p \in \Sigma \mid (K \cdot p) \cap (H \cdot *) \neq \emptyset \}, \\
    &= \Sigma \cap (K \cdot H \cdot *).
\end{align*}

Then the following theorem is also holds:

\begin{theorem}\label{theorem:main_section}
In the setting above, 
let $H_1$ and $H_2$ be both non-empty subsets of $G$.
\begin{enumerate}[(1)]
\item \label{item:mainsec1} The pair $(H_1,H_2)$ is proper in $G$
    if and only if the pair $(\Sigma(H_1),\Sigma(H_2))$ is HBI in $\Sigma$.
    \item \label{item:mainsec2} The following four conditions on $(H_1,H_2)$ are equivalent:
\begin{enumerate}[(\textrm{2}-a)]
    \item $H_1 \sim H_2$ in $G$.
    \item $\Sigma(H_1) \sim \Sigma(H_2)$ in $\Sigma$.
    \item $\pitchfork(H_1:G) = \pitchfork(H_2:G)$.
    \item $\mathcal{HBI}(\Sigma(H_1):\Sigma)^{\Theta} = \mathcal{HBI}(\Sigma(H_2):\Sigma)^{\Theta}$.
\end{enumerate}
\end{enumerate}
\end{theorem}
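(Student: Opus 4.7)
The plan is to reduce Theorem \ref{theorem:main_section} to Theorem \ref{theorem:main} via the map $\eta:\Sigma\to K\backslash M$, which is the restriction to $\Sigma$ of the quotient map $\pi:M\to K\backslash M$. Three preliminary facts do all the heavy lifting at the geometric level: (i) $\eta$ is $1$-Lipschitz, since $d^{K}(K\cdot p,K\cdot q)\le d(p,q)$; (ii) $\eta$ is \emph{proper} as a map of metric spaces, because $\pi$ is proper (the compact group $K$ acts continuously on the Hausdorff space $M$) and $\Sigma$ is closed in $M$, so for bounded $B\subset K\backslash M$ the preimage $\eta^{-1}(B)=\pi^{-1}(B)\cap\Sigma$ is a closed subset of a compact set; (iii) the tautologies $\mu(H)=\eta(\Sigma(H))$ and $\Sigma(H)=\eta^{-1}(\mu(H))\cap\Sigma$, which in particular identify each $\Sigma(H)$ as an element of $\mathcal{P}^{\times}(\Sigma)^{\Theta}$.

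For part (1) I would chain $(H_1,H_2)$ proper in $G\Leftrightarrow (\mu(H_1),\mu(H_2))$ HBI in $K\backslash M\Leftrightarrow (\Sigma(H_1),\Sigma(H_2))$ HBI in $\Sigma$. The first equivalence is Theorem \ref{theorem:main}(1). For the ``$K\backslash M\Rightarrow\Sigma$'' direction of the second, a point $p\in\Sigma(H_1)\cap\overline{N}_{r}^{\Sigma}(\Sigma(H_2))$ automatically has $K\cdot p\in\mu(H_1)\cap\overline{N}_{r}^{K\backslash M}(\mu(H_2))$, so the $\Sigma$-intersection sits inside $\eta^{-1}$ of a bounded set and is therefore bounded by properness of $\eta$. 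For the converse, the distance-realization property of $\Sigma$ together with $\Theta$-invariance of the $\Sigma(H_i)$ produces, for each $K\cdot p\in\mu(H_1)\cap\overline{N}_{r}^{K\backslash M}(\mu(H_2))$, a representative $p'\in\Sigma(H_1)\cap\overline{N}_{r}^{\Sigma}(\Sigma(H_2))$ in the same $K$-orbit; the $K\backslash M$-intersection is then contained in $\eta$ of the $\Sigma$-intersection, hence bounded by $1$-Lipschitzness.

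For part (2), the equivalence (2-a) $\Leftrightarrow$ (2-c) is Theorem \ref{theorem:main}(2). For (2-c) $\Leftrightarrow$ (2-d) I would use part (1) combined with the observation that every $S'\in\mathcal{P}^{\times}(\Sigma)^{\Theta}$ is of the form $\Sigma(H')$ for $H':=\{g\in G: g\cdot *\in S'\}$ (which uses transitivity of the $G$-action together with the $\Theta$-invariance of $S'$), so that $H'\mapsto\Sigma(H')$ surjects onto $\mathcal{P}^{\times}(\Sigma)^{\Theta}$ and translates $\pitchfork$-duals into $\mathcal{HBI}^{\Theta}$-duals. The direction (2-b) $\Rightarrow$ (2-a) is immediate: $1$-Lipschitzness of $\eta$ propagates $\sim$ from $\Sigma$ into $K\backslash M$, and Theorem \ref{theorem:main}(2) then yields $H_1\sim H_2$ in $G$.

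The main obstacle is (2-a) $\Rightarrow$ (2-b): given $\mu(H_1)\sim\mu(H_2)$ in $K\backslash M$, one must exhibit a uniform Hausdorff bound between $\Sigma(H_1)$ and $\Sigma(H_2)$ in $\Sigma$. The na\"ive argument via distance realization only produces, for each $p\in\Sigma(H_1)$, a representative $p'\in\Sigma\cap K\cdot p\subset\Sigma(H_1)$ with $d(p',\Sigma(H_2))\le R$; but $p'$ can be far from $p$, because $\Sigma\cap K\cdot p$ can have large diameter. My plan is to attack this by contrapositive together with Property (S): from a sequence $p_n\in\Sigma(H_1)$ escaping every $\overline{N}_{n}^{\Sigma}(\Sigma(H_2))$ one first shows, using distance realization and $\Theta$-invariance, that the orbits $K\cdot p_n$ must themselves escape $\mu(H_2)$ in $K\backslash M$; Property (S) is the mechanism that converts the ``representative-level'' bound supplied by distance realization into the ``pointwise'' bound on $p$ itself, by producing an isometry that transports $p$ to its realizing $\Sigma$-representative while moving a reference point in $\Sigma(H_2)$ by a controlled amount. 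Pinning down this passage uniformly in $p$, and feeding the result back through Theorem \ref{theorem:main}(2) via $\eta$, is the technical core of the proof and the step I expect to require the most care.
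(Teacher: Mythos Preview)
Your overall strategy coincides with the paper's: reduce everything to Theorem~\ref{theorem:main} through the map $\eta:\Sigma\to K\backslash M$, using the identities $\mu(H)=\eta(\Sigma(H))$ and $\Sigma(H)=\eta^{-1}(\mu(H))$ (the paper records these as Lemma~\ref{lemma:Ximu}). The paper packages your facts (i)--(iii) about $\eta$ into the abstract framework of Section~\ref{section:nonexpanding} (non-expanding proper surjections satisfying Conditions~(1) and~(2)), so that Theorem~\ref{theorem:OmOm} and Corollary~\ref{corollary:bijTheta} apply to $(\Omega,\Omega',\eta)=(\Sigma,K\backslash M,\eta)$ and the proof of Theorem~\ref{theorem:main_section} becomes a one-line citation. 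Your arguments for part~(1) and for the equivalences other than (2-a)$\Rightarrow$(2-b) are essentially the unpackaged form of this.

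The real divergence is in (2-a)$\Rightarrow$(2-b). The paper does \emph{not} use Property~(S) here. What dissolves your ``$p'$ can be far from $p$'' obstacle is Condition~(2) on $\eta$ as formulated in Section~\ref{section:nonexpanding}: for \emph{every} $p\in\Sigma$ and every orbit $\mathcal{O}$, some $q\in\Sigma\cap\mathcal{O}$ satisfies $d(p,q)=d^{K}(K\cdot p,\mathcal{O})$. This is strictly stronger than the pairwise distance-realization you are invoking, and it yields the identity $\eta^{-1}(\overline{N}_r(C'))=\overline{N}_r(\eta^{-1}(C'))$ of Lemma~\ref{lemma:dd_property}. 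From $\mu(H_1)\subset\overline{N}_R(\mu(H_2))$ one then gets immediately
\[
\Sigma(H_1)=\eta^{-1}(\mu(H_1))\subset\eta^{-1}\bigl(\overline{N}_R(\mu(H_2))\bigr)=\overline{N}_R\bigl(\eta^{-1}(\mu(H_2))\bigr)=\overline{N}_R(\Sigma(H_2)),
\]
with no contrapositive and no Property~(S). Your Property~(S) plan, by contrast, has a genuine gap: the isometry $g\in G$ it supplies need not preserve $\Sigma$, so it is unclear how a controlled displacement in $M$ yields any bound \emph{inside} $\Sigma$. Indeed, one can build examples (e.g.\ $G=\Z/2\ltimes\R^2$ acting on $\R^2$ with $\Sigma=\{x\ge 0\}\cup\{y=0\}$) that satisfy Property~(S) and the pairwise distance-realization hypothesis but in which (2-a)$\Rightarrow$(2-b) fails; so Property~(S) alone cannot be the mechanism, and the pointwise Condition~(2) on $\eta$ is the correct input.
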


We will discuss that Kobayashi's properness criterion (Theorem \ref{theorem:Kobayashi96})  can be explained by combining Theorem \ref{theorem:main_section} with some facts for metric geometry on Riemannian symmetric spaces without compact factors (see Section \ref{section:KobayashiCriterion}).

\begin{remark}\label{remark:YBBF}
Let us fix $n \in \Z_{\geq 0}$ 
and a closed Lie subgroup $K$ of the orthogonal group $O(n)$.
We define the Lie group $G := K \ltimes \R^n$ as the semi-direct product of $K$ and the additive Lie group $\R^n$.
Then $G$ has the proper isomeric transitive action on the $n$-dimensional Euclidean space $M := \R^n$,
and one can easily to check that 
the $G$-space $M$ has Property (S). 
Therefore, Theorem \ref{theorem:main} (more precisely, Theorem \ref{theorem:mainlong} stated in Section \ref{section:proof}) works for 
$G := K \ltimes \R^n$.
It should be remarked that 
the properness criterion 
for pairs of connected or discrete subgroups of $G$
given by Baklouti--Bejar--Fendri \cite[Theorem 3.9]{BakloutiBejarFendri2023CptExtRn}
is related to our works in the sense above.
Furthermore, if we take a cross-section $\Sigma$ of $\R^n$ for the $K$-representation on $\R^n$, 
then Theorem \ref{theorem:main_section} also works.
This contains Yoshino's properness criterion 
 \cite[Theorem 9]{Yoshino2007CartanMotion} for the pair of subsets of a Cartan motion group $G = K \ltimes \R^n$ associated to a real linear reductive group.
\end{remark}

\begin{remark}\label{remark:BenoistBuilding}
As we mentioned in Section \ref{section:introduction}, 
similarly to Theorem \ref{theorem:Kobayashi96} \eqref{item:Kobayashi96:1}, 
Benoist \cite{Benoist96} gave a properness criterion
for pairs of closed subgroups of 
a simply-connected semisimple algebraic group $G$ over a non-Archimedean local field $k$. 
We believe that his criterion can be explained by applying Theorem \ref{theorem:main_section} to the isometric $G$-action on the Bruhat--Tits building $\mathcal{B}(G;k)$ of $G$ and an apartment $\Sigma$ of $\mathcal{B}(G;k)$.
This will be explored in detail
 in a future work.
\end{remark}

\begin{remark}
Gu\'{e}ritaud--Kassel \cite[Theorem 1.8]{GueritaudKassel2017maximally} 
proved that a discontinuous group for 
the group manifold $PO(n,1)$ 
should be ``sharp'' (see \cite{KasselKobayashi16} for the definition of sharpness of discontinuous groups)
under a suitable assumption.
Their method is based on metric geometry on the hyperbolic spaces.
We are also interested in sharpness of discontinuous groups 
and in a future work, we will investigate such problems in our setting.
\end{remark}

\section{Non-expanding proper surjective maps and HBI-pairs}\label{section:nonexpanding}

Let $(\Omega,d)$ and $(\Omega',d')$ be both proper metric spaces.
In this section, we consider a continuous proper surjective map $\eta : \Omega \rightarrow \Omega'$ satisfying the following two conditions:
\begin{description}
    \item[Condition (1)] The map $\eta : \Omega \rightarrow \Omega'$ is non-expanding, that is, 
    \[
    d'(\eta(x),\eta(y)) \leq d(x,y)
    \]
    holds for any $x,y \in \Omega$.
    \item[Condition (2)] For each $x \in \Omega$ and $y' \in \Omega'$, 
    there exists $y \in \eta^{-1}(y')$ such that 
    \[
    d(x,y) = d'(\eta(x),y').
    \]
\end{description}

For each subset $C$ of $\Omega$, we define the subset $\Theta(C)$ of $\Omega$ by 
\[
\Theta(C) := \eta^{-1} (\eta(C)) \subset \Omega.
\]
Then $\Theta$ gives a transformation on the set $\mathcal{P}^\times(\Omega)$ of all non-empty subsets of $\Omega$.
Furthermore, one can see that 
$\Theta^2 = \Theta$ and $\Theta$ preserves inclusions.

In this section, 
we give a relationship between some types of HBI-pairs in $\Omega$ and those in $\Omega'$ via $\eta$ (see Theorem \ref{theorem:OmOm}),
and such the theorem can be applied for 
spaces of compact group orbits in proper metric spaces (see Section \ref{subsection:Korbits}).

\subsection{A relationship between HBI-pairs via non-expanding maps}

The purpose of this subsection 
is to give a proof of the following theorem, which gives a relationship between some types of HBI-pairs in $\Omega$ and those in $\Omega'$ via $\eta$:

\begin{theorem}\label{theorem:OmOm}
Let $C_1$ and $C_2$ be both non-empty subsets of $\Omega$.
\begin{enumerate}[(1)]
    \item \label{item:OmOm:CThetaC} If $C_1 \sim C_2$ in $\Omega$, then $\Theta(C_1) \sim \Theta(C_2)$ in $\Omega$.
    \item \label{item:OmOm:ThetaCthetaC} The following two conditions on $(C_1,C_2)$ are equivalent:
    \begin{enumerate}[(\textrm{2}a)]
        \item $\Theta(C_1) \sim \Theta(C_2)$ in $\Omega$.
        \item $\eta(C_1) \sim \eta(C_2)$ in $\Omega'$.
    \end{enumerate}
    \item \label{item:OmOm:HBI} The following four conditions on $(C_1,C_2)$ are equivalent:    
\begin{enumerate}[(\textrm{3}a)]
        \item \label{item:OmOm:C1KC2} The pair $(C_1, \Theta(C_2))$ is HBI in $\Omega$.
        \item \label{item:OmOm:KC1C2} The pair $(\Theta(C_1), C_2)$ is HBI in $\Omega$.
        \item \label{item:OmOm:KK} The pair $(\Theta(C_1), \Theta(C_2))$ is HBI in $\Omega$.
        \item \label{item:OmOm:mumu} The pair $(\eta(C_1), \eta(C_2))$ is HBI in $\Omega'$.
\end{enumerate}
\end{enumerate}    
\end{theorem}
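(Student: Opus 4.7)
The plan is to reduce everything to one master identity,
\[
\Theta(C_1) \cap \overline{N}_r(\Theta(C_2)) \;=\; \eta^{-1}\bigl(\eta(C_1) \cap \overline{N}_r(\eta(C_2))\bigr), \qquad r \geq 0,
\]
together with the basic observation that, for any $\Theta$-stable set $A \subset \Omega$, $A$ is bounded in $\Omega$ if and only if $\eta(A)$ is bounded in $\Omega'$. The forward direction is immediate since $\eta$ is non-expanding; conversely, $\overline{\eta(A)}$ sits in a compact ball of the proper space $\Omega'$, so $\eta^{-1}(\overline{\eta(A)})$ is compact by properness of the map $\eta$, whence $A$ is bounded.

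Before the main computation I would prove the auxiliary fact that $\overline{N}_r(\Theta(C))$ is itself $\Theta$-stable for every non-empty $C \subset \Omega$ and every $r \geq 0$. If $y \in \Theta(\overline{N}_r(\Theta(C)))$, choose $z \in \overline{N}_r(\Theta(C))$ with $\eta(y) = \eta(z)$ and $z' \in \Theta(C)$ with $d(z, z') \leq r$. Condition (1) gives $d'(\eta(y), \eta(z')) = d'(\eta(z), \eta(z')) \leq r$, and Condition (2) then supplies $y' \in \eta^{-1}(\eta(z')) \subset \Theta(C)$ with $d(y, y') \leq r$; hence $y \in \overline{N}_r(\Theta(C))$.

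With these in hand, part (1) is immediate: if $C_1 \subset \overline{N}_r(C_2) \subset \overline{N}_r(\Theta(C_2))$, applying the monotone operation $\Theta$ and using the auxiliary fact yields $\Theta(C_1) \subset \overline{N}_r(\Theta(C_2))$, and the reverse inclusion is symmetric with the same $r$. Part (2) runs along similar lines: the forward direction uses non-expansiveness together with the identity $\eta(\Theta(C)) = \eta(C)$, which follows from surjectivity of $\eta$; for the converse, given $x \in \Theta(C_1)$ with $\eta(x) \in \overline{N}_r(\eta(C_2))$, pick $c_2 \in C_2$ close to $\eta(x)$ downstairs and apply Condition (2) to produce $y \in \Theta(C_2)$ within distance $r$ of $x$. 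For part (3), the master identity is proved the same way (the inclusion $\subset$ by non-expansiveness, the inclusion $\supset$ by the Condition (2) lift), and combined with the $\Theta$-stable boundedness equivalence it yields (3c) $\Leftrightarrow$ (3d). The trivial inclusion $C_i \subset \Theta(C_i)$ gives (3c) $\Rightarrow$ (3a) and (3c) $\Rightarrow$ (3b). Finally, for (3a) $\Rightarrow$ (3d): if $y = \eta(c_1) \in \eta(C_1) \cap \overline{N}_r(\eta(C_2))$, Condition (2) produces $z \in \Theta(C_2)$ with $d(c_1, z) \leq r$, so $c_1 \in C_1 \cap \overline{N}_r(\Theta(C_2))$, which is bounded by hypothesis, and $y = \eta(c_1)$ lies in its bounded image under $\eta$; the implication (3b) $\Rightarrow$ (3d) is symmetric.

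The main obstacle is the repeated appeal to Condition (2) to transfer $r$-approximations from $\Omega'$ back up to $\Omega$ without enlarging the radius. Without such a distance-preserving lift, the preimage of an $r$-neighbourhood of $\eta(C_2)$ need not sit inside any fixed neighbourhood of $\Theta(C_2)$, and the $\Theta$-stability of metric neighbourhoods would fail. Once Condition (2) is absorbed into the two preliminary lemmas, the remainder is formal set-theoretic manipulation based on $\eta(\eta^{-1}(S)) = S$ (surjectivity of $\eta$) and on the elementary fact that non-expanding maps send bounded sets to bounded sets.
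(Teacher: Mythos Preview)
Your proposal is correct and follows essentially the same route as the paper: the paper isolates the identities $\eta^{-1}(\overline{N}_r(C')) = \overline{N}_r(\eta^{-1}(C'))$, $\eta(\overline{N}_r(C)) = \overline{N}_r(\eta(C))$, and $\Theta(\overline{N}_r(C)) = \overline{N}_r(\Theta(C))$ as a separate lemma (your ``master identity'' and ``auxiliary fact'' are immediate consequences), and then argues the three parts exactly as you do, the only cosmetic difference being that the paper closes the cycle in part~(3) via (3a)$\Rightarrow$(3c) using $\Theta(C_1)\cap\overline{N}_r(\Theta(C_2)) = \Theta\bigl(C_1\cap\overline{N}_r(\Theta(C_2))\bigr)$ rather than via (3a)$\Rightarrow$(3d) as you do.
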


Let us give a proof of  
Theorem \ref{theorem:OmOm}, 
by applying the lemma below:

\begin{lemma}\label{lemma:dd_property}
Let $C$ be a non-empty subset of $\Omega$
and 
$C'$ a non-empty subset of $\Omega'$.
We also fix $r \geq 0$.
Then the following equations hold:
\begin{align}
\eta^{-1}(\overline{N}_r(C')) &= \overline{N}_r(\eta^{-1}(C')) \quad \text{ in } \Omega. \label{eq:Cp} \\
\eta(\overline{N}_r(C)) &= \overline{N}_r(\eta(C))  \quad \text{ in } \Omega', \label{eq:C} \\
\Theta (\overline{N}_r(C))) &= \overline{N}_r(\Theta(C))) \quad \text{ in } \Omega. \label{eq:ThetaC}
\end{align}
\end{lemma}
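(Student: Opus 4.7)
The plan is to prove the three equations in order, with the third following immediately from the first two. Throughout, the non-expanding property (Condition (1)) will give one inclusion ``for free'', while the section-like property (Condition (2)) will be what makes the other inclusion nontrivial.

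For equation \eqref{eq:Cp}, I would argue both inclusions by a direct unpacking of definitions. Given $x \in \overline{N}_r(\eta^{-1}(C'))$, pick $y \in \eta^{-1}(C')$ with $d(x,y) \leq r$; then Condition (1) gives $d'(\eta(x), \eta(y)) \leq r$ with $\eta(y) \in C'$, so $\eta(x) \in \overline{N}_r(C')$. For the reverse inclusion, given $x$ with $\eta(x) \in \overline{N}_r(C')$, choose $c' \in C'$ with $d'(\eta(x), c') \leq r$ and invoke Condition (2): there exists $y \in \eta^{-1}(c') \subset \eta^{-1}(C')$ with $d(x,y) = d'(\eta(x), c') \leq r$, whence $x \in \overline{N}_r(\eta^{-1}(C'))$.

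For equation \eqref{eq:C}, the forward inclusion $\eta(\overline{N}_r(C)) \subset \overline{N}_r(\eta(C))$ follows immediately from Condition (1). For the reverse inclusion, I would take $y' \in \overline{N}_r(\eta(C))$ and a point $c \in C$ with $d'(y', \eta(c)) \leq r$, then apply Condition (2) with $x = c$ and target $y'$ to produce $x_0 \in \eta^{-1}(y')$ satisfying $d(c, x_0) = d'(\eta(c), y') \leq r$; this $x_0$ lies in $\overline{N}_r(C)$ and maps to $y'$.

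Finally, for equation \eqref{eq:ThetaC}, I would simply chain the previous two:
\[
\Theta(\overline{N}_r(C)) = \eta^{-1}(\eta(\overline{N}_r(C))) = \eta^{-1}(\overline{N}_r(\eta(C))) = \overline{N}_r(\eta^{-1}(\eta(C))) = \overline{N}_r(\Theta(C)),
\]
using \eqref{eq:C} at the second equality and \eqref{eq:Cp} (applied to $C' = \eta(C)$) at the third.

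There is no real obstacle here; the only subtlety is that both nontrivial inclusions (the reverse direction of \eqref{eq:Cp} and of \eqref{eq:C}) rely essentially on Condition (2), which is precisely what allows distances downstairs to be lifted exactly to distances upstairs. It is worth flagging in the write-up that surjectivity of $\eta$ and properness of $\Omega$, $\Omega'$ play no direct role in this lemma — Conditions (1) and (2) carry the whole argument.
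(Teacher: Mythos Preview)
Your proof is correct and follows exactly the approach the paper sketches: the paper simply asserts that \eqref{eq:Cp} and \eqref{eq:C} ``come immediately from Conditions (1) and (2)'' and that \eqref{eq:ThetaC} is obtained by combining them, which is precisely what you have unpacked in detail. Your remark that surjectivity and properness are not used here is also accurate and a nice observation.
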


\begin{proof}[Proof of Lemma \ref{lemma:dd_property}]
The equations \eqref{eq:Cp} and \eqref{eq:C} comes immediately from Conditions (1) and (2) on the map $\eta : \Omega \rightarrow \Omega'$.
The equality \eqref{eq:ThetaC} is obtained by the equations \eqref{eq:Cp} and \eqref{eq:C}.
\end{proof}

\begin{proof}[Proof of Theorem \ref{theorem:OmOm}]
First, the claim \eqref{item:OmOm:CThetaC} comes immediately from the equation \eqref{eq:ThetaC} and the observation that $\Theta$ preserves inclusions.
Second, the claim \eqref{item:OmOm:ThetaCthetaC} can be obtained by the equations \eqref{eq:Cp} and \eqref{eq:C} in Lemma \ref{lemma:dd_property}.
We shall prove the claim \eqref{item:OmOm:HBI}.
The implications 
\eqref{item:OmOm:KK} $\Rightarrow$ \eqref{item:OmOm:C1KC2} 
and 
\eqref{item:OmOm:KK} $\Rightarrow$ \eqref{item:OmOm:KC1C2}
are both trivial.
Let us prove the implications 
\eqref{item:OmOm:C1KC2} $\Rightarrow$ \eqref{item:OmOm:KK} and 
\eqref{item:OmOm:KC1C2} $\Rightarrow$ \eqref{item:OmOm:KK}.
Fix any $r \geq 0$.
Then by Condition (2) on $\eta$, one can easily see that 
the following two equations hold:
\begin{align*}
\Theta(C_1) \cap \overline{N}_r(\Theta(C_2))
    &= \Theta (C_1 \cap \overline{N}_r(\Theta(C_2))), \\
\overline{N}_r(\Theta(C_1)) \cap \Theta(C_2)
    &= \Theta(\overline{N}_r(\Theta(C_1)) \cap C_2). 
\end{align*}
Both of the implications 
\eqref{item:OmOm:C1KC2} $\Rightarrow$ \eqref{item:OmOm:KK} and 
\eqref{item:OmOm:KC1C2} $\Rightarrow$ \eqref{item:OmOm:KK}
come immediately from the observation 
that for any bounded subset $C_0$ of $\Omega$, 
the subset $\Theta(C_0)$ is also a bounded in $\Omega$, 
and equations above.
Finally, let us prove the equivalence \eqref{item:OmOm:KK} $\Leftrightarrow$ \eqref{item:OmOm:mumu}.
Fix $r \geq 0$.
Then by Lemma \ref{lemma:dd_property}, we have:
\begin{align*}
    \eta(C_1) \cap \overline{N}_r(\eta(C_2))
        &= \eta(C_1) \cap \eta(\overline{N}_r (C_2)) \\
        &= \eta(\Theta(C_1)) \cap \eta(\Theta(\overline{N}_r(C_2))) \\
        &= \eta(\Theta(C_1) \cap \Theta(\overline{N}_r(C_2))) \\
        &= \eta(\Theta(C_1) \cap \overline{N}_r(\Theta(C_2))).
\end{align*}
Since $\Theta(C_1) \cap \overline{N}_r(\Theta(C_2))$ is a $\Theta$-fixed subset of $\Omega$ and $\eta$ is a proper map, 
one can see that 
$\eta(C_1) \cap \overline{N}_r(\eta(C_2))$ is bounded in $\Omega'$ if and only if $\Theta(C_1) \cap \overline{N}_r(\Theta(C_2))$ is bounded in $\Omega$.
This proves the equivalence
\eqref{item:OmOm:KK} $\Leftrightarrow$ \eqref{item:OmOm:mumu}.
\end{proof}

We shall put
\begin{align*}
\mathcal{P}^\times(\Omega)^{\Theta} &:= \{ C \in \mathcal{P}^{\times}(\Omega) \mid \Theta(C) = C \}, \\
[\mathcal{P}^\times(\Omega)^{\Theta}] &:= \{ [C] \in [\mathcal{P}^\times(\Omega)] \mid C \in \mathcal{P}^\times(\Omega)^{\Theta} \} \subset [\mathcal{P}^\times(\Omega)].
\end{align*}
Furthermore, for each non-empty subset $C$ of $\Omega$,  we also write 
\begin{align*}
\mathcal{HBI}(C:\Omega)^{\Theta} 
&:= \mathcal{HBI}(C:\Omega) \cap \mathcal{P}^\times(\Omega)^{\Theta} \subset \mathcal{HBI}(C:\Omega), \\
\mathcal{HBI}([C]:\Omega)^{\Theta} 
    &:= \mathcal{HBI}([C]:\Omega) \cap [\mathcal{P}^\times(\Omega)^{\Theta}] \subset \mathcal{HBI}([C]:\Omega).    
\end{align*}

Then, as a corollary to Theorem \ref{theorem:OmOm}, 
we also obtain the following:

\begin{corollary}\label{corollary:bijTheta}
Let us consider the bijective map 
\[
\eta_* : \mathcal{P}^\times(\Omega)^{\Theta} \stackrel{\sim}{\longrightarrow} \mathcal{P}^\times(\Omega'),~ C \mapsto \eta(C).
\]
\begin{enumerate}[(1)]
\item 
For each $C \in \mathcal{P}^\times(\Omega)^{\Theta}$, 
the map $\eta_*$ gives a one-to-one correspondence between $\mathcal{HBI}(C:\Omega)^{\Theta}$ and $\mathcal{HBI}(\eta(C):\Omega')$.
\item The bijection $\eta_* : \mathcal{P}^\times(\Omega)^{\Theta} \stackrel{\sim}{\rightarrow} \mathcal{P}^\times(\Omega')$ induces the bijective map below
\[
\eta_*^{-} : [\mathcal{P}^\times(\Omega)^{\Theta}] \stackrel{\sim}{\longrightarrow} [\mathcal{P}^\times(\Omega')],~ [C] \mapsto [\eta(C)].
\]
Furthermore, 
for each $C \in \mathcal{P}^\times(\Omega)^{\Theta}$, 
the map $\eta_*^{-}$ gives a one-to-one correspondence between 
$\mathcal{HBI}([C]:\Omega)^{\Theta}$ 
and 
$\mathcal{HBI}([\eta(C)]:\Omega')$.
\end{enumerate}
\end{corollary}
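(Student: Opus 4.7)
The plan is to extract both parts of the corollary directly from Theorem \ref{theorem:OmOm}, combined with some routine bookkeeping about the $\Theta$-fixed subsets of $\Omega$. First I would verify that $\eta_*$ is indeed a bijection between $\mathcal{P}^\times(\Omega)^{\Theta}$ and $\mathcal{P}^\times(\Omega')$. The surjectivity of $\eta$ guarantees that $\eta(\eta^{-1}(D')) = D'$ for every $D' \in \mathcal{P}^\times(\Omega')$, while for $C \in \mathcal{P}^\times(\Omega)^{\Theta}$ one has $\eta^{-1}(\eta(C)) = \Theta(C) = C$. Hence the assignment $D' \mapsto \eta^{-1}(D')$ serves as a two-sided inverse of $\eta_*$, and it manifestly takes values in $\mathcal{P}^\times(\Omega)^{\Theta}$.

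For part (1), the key is the equivalence \eqref{item:OmOm:KK}$\Leftrightarrow$\eqref{item:OmOm:mumu} in Theorem \ref{theorem:OmOm}\eqref{item:OmOm:HBI}. Fix $C \in \mathcal{P}^\times(\Omega)^{\Theta}$. For any $C' \in \mathcal{P}^\times(\Omega)^{\Theta}$, the identities $C = \Theta(C)$ and $C' = \Theta(C')$ allow that equivalence to be read as: the pair $(C, C')$ is HBI in $\Omega$ if and only if the pair $(\eta(C), \eta(C'))$ is HBI in $\Omega'$. Thus $\eta_*$ sends $\mathcal{HBI}(C:\Omega)^{\Theta}$ into $\mathcal{HBI}(\eta(C):\Omega')$, and the reverse inclusion follows by applying the inverse $D' \mapsto \eta^{-1}(D')$ and reading the same equivalence the other way.

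For part (2), I would first invoke Theorem \ref{theorem:OmOm}\eqref{item:OmOm:ThetaCthetaC} to observe that for $\Theta$-fixed $C_1, C_2$ one has $C_1 \sim C_2$ in $\Omega$ iff $\eta(C_1) \sim \eta(C_2)$ in $\Omega'$, which yields both well-definedness and injectivity of $\eta_*^{-}$ on $[\mathcal{P}^\times(\Omega)^{\Theta}]$; surjectivity is immediate via $[D'] \mapsto [\eta^{-1}(D')]$. The claim about $\mathcal{HBI}$ is then formal: given $\mathcal{C}' \in \mathcal{HBI}([C]:\Omega)^{\Theta}$, pick its $\Theta$-fixed representative $C'$ and apply part (1) to get $\eta_*^{-}(\mathcal{C}') = [\eta(C')] \in \mathcal{HBI}([\eta(C)]:\Omega')$; conversely, for $[D'] \in \mathcal{HBI}([\eta(C)]:\Omega')$ the $\Theta$-fixed preimage $\eta^{-1}(D')$ furnishes a representative whose class lies in $\mathcal{HBI}([C]:\Omega)^{\Theta}$ and maps to $[D']$.

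The work is essentially bookkeeping, and no genuine difficulty should arise. The subtlest point will be remembering that elements of $\mathcal{HBI}([C]:\Omega)^{\Theta}$ are equivalence classes \emph{containing} a $\Theta$-fixed representative rather than classes consisting entirely of $\Theta$-fixed sets, and choosing such representatives systematically so that Theorem \ref{theorem:OmOm} can be applied verbatim.
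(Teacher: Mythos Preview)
Your proposal is correct and matches the paper's approach: the paper states Corollary \ref{corollary:bijTheta} immediately after Theorem \ref{theorem:OmOm} with the words ``as a corollary to Theorem \ref{theorem:OmOm}, we also obtain the following'' and gives no further proof, so the intended argument is precisely the routine unpacking you describe. Your identification of the relevant equivalences (\eqref{item:OmOm:KK}$\Leftrightarrow$\eqref{item:OmOm:mumu} for part (1), and Theorem \ref{theorem:OmOm}\eqref{item:OmOm:ThetaCthetaC} for part (2)) and your observation about choosing $\Theta$-fixed representatives are exactly what is needed.
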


\subsection{Spaces of orbits in proper metric spaces}\label{subsection:Korbits}

Let $(M,d)$ be a proper metric space equipped with a  continuous isometric action of a compact group $K$.
We denote by $K \backslash M$ the space of $K$-orbits in $M$ and by 
\[
\varpi : M \rightarrow K \backslash M, ~ p \mapsto K \cdot p
\]
the quotient map. 
Note that the space $K \backslash M$ is locally-compact Hausdorff with respect to the quotient topology,
and the map $\varpi : M \rightarrow K \backslash M$ is proper.
Then     
\begin{align*}
    d^K : K \backslash M &\times K \backslash M \rightarrow \R_{\geq 0}, \\
    (\mathcal{O}_1,&\mathcal{O}_2) \mapsto \min\{ d(x,y)  \mid (x,y) \in \mathcal{O}_1 \times \mathcal{O}_2 \}
    \end{align*}
is well-defined 
and defines a proper metric on the space $K \backslash M$.
The quotient topology on $K \backslash M$ coincides with the topology on $K \backslash M$ induced by the metric $d^K$. 

One can easily check that 
the map $\varpi : M \rightarrow K \backslash M$ satisfies Conditions (1) and (2),
and thus 
Theorem \ref{theorem:OmOm} and Corollary \ref{corollary:bijTheta} can be applied for $(\Omega,\Omega',\eta) = (M,K\backslash M,\varpi)$.

Furthermore, let $\Sigma$ be a closed subset of $M$ satisfying the following two conditions:
\begin{itemize}
    \item For each $K$-orbit $\mathcal{O}$ in $M$, the intersection $\Sigma \cap \mathcal{O}$ is non-empty.
    \item For each pair of $K$-orbits $(\mathcal{O}_1,\mathcal{O}_2)$ in $M$, 
    there exists $(p,q) \in (\Sigma \cap \mathcal{O}_1) \times (\Sigma \cap \mathcal{O}_2)$ such that $d(p,q) = d^K(\mathcal{O}_1,\mathcal{O}_2)$.
\end{itemize}
Let us consider $\Sigma$ as a proper metric space 
with respect to the restriction of the metric $d$ on $\Sigma$.
Then the restriction 
\[
\eta := \varpi|_{\Sigma} : \Sigma \rightarrow K \backslash M, ~ p \mapsto K \cdot p
\] 
is surjective and satisfies Conditions (1) and (2).
Therefore,  
Theorem \ref{theorem:OmOm} and Corollary \ref{corollary:bijTheta} can be applied for $(\Omega,\Omega',\eta) = (\Sigma,K\backslash M,\eta)$.

Let us consider the situation that 
$M$ is a connected complete Riemannian manifold 
and the $K$-action on $M$ is smooth and isometric in the sense that the Riemannian metric tensor on $M$ is fixed by $K$. 
A connected, closed, regular smooth submanifold $\Sigma$ of $M$ is called a section for the $K$-action on $M$ if it meets all $K$-orbits in $M$ orthogonally (see \cite{PT87} for the details).
Furthermore, the $K$-action on $M$ is said to be polar [resp. hyperpolar] if it admits a section [resp. flat section].

A section $\Sigma$ for the $K$-action on $M$ satisfies the two conditions above (a proof can be found in \cite[Proposition 1.3.2]{Gorodski2022arXivTopics}),
and thus even in the situation above, 
Theorem \ref{theorem:OmOm} and Corollary \ref{corollary:bijTheta} can be applied for $(\Omega,\Omega',\eta) = (\Sigma,K\backslash M,\eta)$.
Note that for a section $\Sigma$ for the $K$-action  on $M$, 
by defining
\begin{align*}
N_K(\Sigma) &:= \{ k \in K \mid k \cdot \Sigma = \Sigma \} \subset K, \\
Z_K(\Sigma) &:= \{ k \in K \mid k \cdot p = p \text{ for any } p \in \Sigma \} \subset N_K(\Sigma),
\end{align*}
we obtain the generalized Weyl group $W := N_K(\Sigma)/Z_K(\Sigma)$ acting on the section $\Sigma$.
It is known that the generalized Weyl group $W$ is finite (in our setting), 
and for each $p \in \Sigma$, 
\[
(K \cdot p) \cap \Sigma = W \cdot p
\]
(see \cite[Section 4]{PT87} for the details).
In particular, in this situation, we have 
\[
\Theta(C) := \eta^{-1}(\Theta(C)) = W \cdot C 
\]
for any non-empty subset $C$ of $\Sigma$.

\section{Proofs of Theorems \ref{theorem:main} and \ref{theorem:main_section}}\label{section:proof}

The goal of this section is to give proofs of  
Theorems \ref{theorem:main} and \ref{theorem:main_section} stated in Section \ref{subsection:main}.

Let $(M,d)$ be a proper metric space 
equipped with a continuous proper isometric transitive action of 
a locally-compact group $G$.
We fix a base point $*$ of $M$, 
and put 
\[
\pi : G \rightarrow M, ~ g \mapsto g \cdot *.
\]
Note that the map $\pi$ is continuous proper and surjective.
The compact isotropy subgroup of $G$ at the point $*$ is denoted by $K$. 

We first remark that, in the situation above, 
by Propositions \ref{proposition:LHHBIdual}, 
$G$ should be $\sigma$-compact.

Let us introduce the following notions:
For a non-empty subset $H$ of $G$ 
and a non-empty compact subset $C$ of $M$, 
we define the non-empty subset $S^H(C)$ of $M$ by 
\begin{align*}
S^H(C) 
    &:= \pi^{-1}(C) \cdot \pi(H) \\
    &= \{ g \cdot \pi(h) \mid g \in \pi^{-1}(C) \text{ and } h \in H \} \subset M.
\end{align*}
For the case where $C = \{ * \}$, 
we just put 
\[
S^H(*) := S^H(\{* \}) = K \cdot \pi(H) = K \cdot H \cdot * \subset M.
\]

\subsection{Proper isometric transitive actions on metric spaces and proper pairs}

In this subsection, our goal is to show the following theorem:

\begin{theorem}\label{theorem:GM}
Let $H_1$ and $H_2$ be both non-empty subsets of $G$.
\begin{enumerate}
    \item \label{item:GM:SNSD}  If $S^{H_1}(*) \sim S^{H_2}(*)$ in $M$, then $H_1 \sim H_2$ in $G$.
    \item \label{item:GM:PPGimplCCM} If the pair $(H_1,H_2)$ is proper in $G$, 
    then the pair $(S^{H_1}(*),S^{H_2}(*))$ is HBI in $M$.
\end{enumerate}    
\end{theorem}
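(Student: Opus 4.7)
The key observation underlying both parts is that the map $\pi : G \to M$ is a continuous proper surjection whose fibers are the right cosets $gK$, and for any $r \geq 0$ the set $D_r := \pi^{-1}(\overline{N}_r(*)) \subset G$ is compact and $K$-bi-invariant. The dictionary between distances in $M$ and group elements in $G$ is: for any $g_1,g_2 \in G$, one has $d(g_1 \cdot *, g_2 \cdot *) \leq r$ if and only if $g_1^{-1} g_2 \in D_r$. Also, $S^H(*) = KH \cdot *$, so an element of $S^H(*)$ has the form $k h \cdot *$ for some $k \in K$ and $h \in H$.

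For part \eqref{item:GM:SNSD}, I assume $S^{H_1}(*) \sim S^{H_2}(*)$, so there exists $r \geq 0$ with $S^{H_1}(*) \subset \overline{N}_r(S^{H_2}(*))$ and the symmetric inclusion. Given $h_1 \in H_1$, the element $h_1 \cdot *$ lies in $S^{H_1}(*)$, and thus within distance $r$ of some $k h_2 \cdot *$ with $k \in K$, $h_2 \in H_2$. Translating via the isometric action, $(kh_2)^{-1} h_1 \cdot *$ lies in $\overline{N}_r(*)$, so $(kh_2)^{-1} h_1 \in D_r$, giving $h_1 \in K \cdot H_2 \cdot D_r$. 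By the symmetric inclusion, $H_2 \subset K \cdot H_1 \cdot D_r$. Setting $D := K \cup D_r \cup D_r^{-1}$, which is compact, we obtain $H_1 \subset D \cdot H_2 \cdot D^{-1}$ and $H_2 \subset D \cdot H_1 \cdot D^{-1}$, hence $H_1 \sim H_2$ in $G$.

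For part \eqref{item:GM:PPGimplCCM}, I assume the pair $(H_1, H_2)$ is proper in $G$. Fix $r \geq 0$ and take any $p \in S^{H_1}(*) \cap \overline{N}_r(S^{H_2}(*))$. Write $p = k_1 h_1 \cdot *$ with $k_1 \in K$, $h_1 \in H_1$, and pick $k_2 \in K$, $h_2 \in H_2$ with $d(p, k_2 h_2 \cdot *) \leq r$. The same translation argument gives $(k_2 h_2)^{-1}(k_1 h_1) \in D_r$, hence $h_1 \in k_1^{-1} \cdot K \cdot H_2 \cdot D_r \subset D \cdot H_2 \cdot D^{-1}$ for the compact set $D := K \cup D_r^{-1}$. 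Consequently $h_1 \in H_1 \cap (D \cdot H_2 \cdot D^{-1})$, and by Proposition \ref{proposition:LLHBI} together with the properness hypothesis, the closure $C$ of this intersection is compact in $G$. Then $p = k_1 \cdot \pi(h_1) \in K \cdot \pi(C)$, which is compact in $M$ since $\pi$ is continuous and $K$ acts continuously. Therefore $S^{H_1}(*) \cap \overline{N}_r(S^{H_2}(*))$ is contained in a compact subset of $M$, hence bounded, and by Proposition \ref{proposition:CI} the pair $(S^{H_1}(*), S^{H_2}(*))$ is HBI in $M$.

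The proof is essentially a bookkeeping argument: all the geometric content is packaged in the compactness of $K$ and of $D_r = \pi^{-1}(\overline{N}_r(*))$, which follows from the properness of $\pi$ and of the metric $d$. The main subtlety to watch for is the placement of the compact factor $K$ so that the set $H_1 \cap D H_2 D^{-1}$ actually appears on the right-hand side; this is handled by absorbing the left $K$ factor into the compact conjugator $D$, which is possible precisely because $K$ is compact. Notably, Property (S) plays no role in this theorem.
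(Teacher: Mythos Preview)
Your proof is correct and follows essentially the same approach as the paper: both arguments hinge on the identity (stated as Lemma~\ref{lemma:piinvNS} in the paper) that $\pi^{-1}(\overline{N}_r(S^H(*))) = K \cdot H \cdot D_r$, which you establish inline by element-chasing rather than as a separate lemma. The paper streamlines the bookkeeping by observing that $D_r^{-1} = D_r \supset K$, so one may take $D = D_r$ directly rather than the union $K \cup D_r \cup D_r^{-1}$, but this is cosmetic.
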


To show Theorem \ref{theorem:GM}, 
let us give a proof of the lemma below:

\begin{lemma}\label{lemma:piinvNS}
Let $H$ be a non-empty subset of $G$ and fix $r \geq 0$.
We put $D_r := \pi^{-1}(\overline{N}_r(*)) \subset G$.
Then 
\[
\pi^{-1}(\overline{N}_r(S^{H}(*))) = K \cdot H \cdot D_r.
\]
\end{lemma}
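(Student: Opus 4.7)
The plan is purely computational: unfold both sides of the identity via the definitions and exploit the $G$-invariance of $d$. The key observation to record first is that, since $K$ is the isotropy subgroup of $G$ at $*$ and $d$ is $G$-invariant, we have
\[
D_r = \pi^{-1}(\overline{N}_r(*)) = \{ g \in G \mid d(g \cdot *, *) \leq r \},
\]
and this set is bi-$K$-invariant, i.e.\ $K D_r K = D_r$, since $K$ fixes $*$ and acts isometrically.

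For the inclusion $K \cdot H \cdot D_r \subset \pi^{-1}(\overline{N}_r(S^H(*)))$, I would take an arbitrary element $g = khd$ with $k \in K$, $h \in H$, $d \in D_r$ and compute directly, using left-invariance of $d$, that $d(g \cdot *, kh \cdot *) = d(d \cdot *, *) \leq r$; since $kh \cdot * \in S^H(*)$, this places $g \cdot *$ in $\overline{N}_r(S^H(*))$. For the reverse inclusion, given $g \in \pi^{-1}(\overline{N}_r(S^H(*)))$ I would pick $k \in K$ and $h \in H$ with $d(g \cdot *, kh \cdot *) \leq r$, rewrite this by $G$-invariance as $d((kh)^{-1} g \cdot *, *) \leq r$, hence $(kh)^{-1} g \in D_r$, and conclude $g \in kh \cdot D_r \subset K \cdot H \cdot D_r$.

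There is no real obstacle here: the lemma is a clean set-theoretic identity whose entire content is bookkeeping for the isometric $G$-action and the fact that $K$ stabilises $*$. Property (S) plays no role in this step; it only enters later, when Theorem \ref{theorem:GM} is applied to relate proper pairs in $G$ to HBI-pairs in $M$. I expect the full write-up to occupy only a few lines.
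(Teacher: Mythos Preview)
Your proposal is correct and follows essentially the same route as the paper: both prove the two inclusions by direct computation using $G$-invariance of the metric and the identity $S^H(*) = K \cdot H \cdot *$, rewriting $d(g\cdot *, kh\cdot *) \le r$ as $(kh)^{-1}g \in D_r$. One minor side remark: Property~(S) is not invoked in Theorem~\ref{theorem:GM} either---it only enters in the converse direction, Theorem~\ref{theorem:GMstar}, via Lemma~\ref{lemma:SNNS}.
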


\begin{proof}[Proof of Lemma \ref{lemma:piinvNS}]
First, we show 
\[
\pi^{-1}(\overline{N}_r(S^{H}(*))) \subset K \cdot H \cdot D_r.
\]
Take any $g \in \pi^{-1}(\overline{N}_r(S^{H}(*)))$.
Put $p := \pi(g) \in \overline{N}_r(S^{H}(*))$.
One can find and do take $q \in S^{H}(*)$ with $d(p,q) \leq r$.
Since $S^{H}(*) = K \cdot H \cdot *$, 
one can also take $k \in K$ and $h \in H$ with $q = k \cdot h \cdot *$. 
Then 
\begin{align*}
    d(\pi(h^{-1} \cdot k^{-1} \cdot g),*)
        &= d(p,q) \leq r.
\end{align*}
Therefore, $h^{-1} \cdot k^{-1} \cdot g \in D_r$ 
and thus $g \in K \cdot H \cdot D_r$.

Conversely, let us prove 
\[
\pi^{-1}(\overline{N}_r(S^{H}(*))) \supset K \cdot H \cdot D_r.
\]
Fix any $k \in K$, $h \in H$ and $d \in D_r$.
We put $p := \pi(d) \in \overline{N}_r(*)$.
Then $\pi(k \cdot h \cdot d) = k \cdot h \cdot p$
and 
\[
d(k \cdot h \cdot p,k \cdot h \cdot *) = d(p,*) \leq r.
\]
Since $k \cdot h \cdot * \in S^{H}(*)$, 
we have that $\pi(k \cdot h \cdot d) \in \overline{N}_r(S^H(*))$.
\end{proof}

We give a proof of Theorem \ref{theorem:GM} as below:

\begin{proof}[Proof of Theorem \ref{theorem:GM}]
First, we shall prove the claim \eqref{item:GM:SNSD}.
Suppose that 
\[
S^{H_1}(*) \subset \overline{N}_r(S^{H_2}(*)) \text{ and } S^{H_2}(*) \subset \overline{N}_r(S^{H_1}(*))
\]
for $r \geq 0$.
Put $D_r := \pi^{-1}(\overline{N}_r(*))$.
Then $D_r$ is compact 
and $D_r^{-1} = D_r \supset K$.
For $(i,j) = (1,2)$ or $(2,1)$, 
by Lemma \ref{lemma:piinvNS}, 
we have 
\begin{align*}
H_i &\subset K \cdot H_i \cdot K \\
    &= \pi^{-1}(S^{H_i}(*)) \\
    &\subset \pi^{-1}(\overline{N}_r(S^{H_j}(*))) \\
    &= K \cdot H_j \cdot D_r \\
    &\subset D_r \cdot H_j \cdot D_r^{-1}.
\end{align*}
This proves $H_1 \sim H_2$.

Next, let us prove the claim \eqref{item:GM:PPGimplCCM}.
Assume that the pair $(H_1,H_2)$ is proper in $G$.
Let us fix $r \geq 0$.
By Proposition \ref{proposition:CI}, 
we only need to show that the subset $S^{H_1}(*) \cap \overline{N}_r(S^{H_2}(*))$ of $M$ is bounded.    
Put $D_r := \pi^{-1}(\overline{N}_r(*))$.
Then by Lemma \ref{lemma:piinvNS}, we have 
    \begin{align*}
    \pi^{-1}(S^{H_1}(*) \cap \overline{N}_r(S^{H_2}(*)))
        &= \pi^{-1}(S^{H_1}(*)) \cap \pi^{-1}(\overline{N}_r(S^{H_2}(*))) \\
        &= (K \cdot H_1 \cdot K) \cap (K \cdot H_2 \cdot D_r).
    \end{align*}
Since $K \cdot H_1 \cdot K \in [H_1]$, $K \cdot H_2 \cdot D_r \in [H_2]$ and $([H_1],[H_2])$ is proper in $G$, the intersection $(K \cdot H_1 \cdot K) \cap (K \cdot H_2 \cdot D_r)$ is relatively compact in $G$.
Thus $S^{H_1}(*) \cap \overline{N}_r(S^{H_2}(*))$ is bounded in $M$ since $\pi$ is surjective.
\end{proof}

\subsection{Proofs of Theorems \ref{theorem:main} and \ref{theorem:main_section}}\label{subsection:proofThmmain}

In this subsection, we give proofs of Theorems \ref{theorem:main} and \ref{theorem:main_section}.
We suppose that our metric $G$-space $M$ has the property (S) stated in Section \ref{section:introduction}.

Then the following theorem, which claims the converse implications of those in Theorem \ref{theorem:GM}, holds:

\begin{theorem}\label{theorem:GMstar}
In the setting above, 
let $H_1$ and $H_2$ be both non-empty subsets of $G$.
\begin{enumerate}
    \item \label{item:GM:SNSDstar}  $H_1 \sim H_2$ in $G$ if and only if $S^{H_1}(*) \sim S^{H_2}(*)$ in $M$.
    \item \label{item:GM:PPGimplCCMstar} The pair $(H_1,H_2)$ is proper in $G$ if and only if 
    the pair $(S^{H_1}(*),S^{H_2}(*))$ is HBI in $M$.
\end{enumerate}    
\end{theorem}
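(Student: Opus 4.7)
The plan is to establish the two converse implications missing from Theorem \ref{theorem:GM}, both of which will require Property (S) in an essential way. The key technical step is the following consequence of Property (S): for any compact $D \subset G$, setting $r := \sup_{d \in D} d(d \cdot *, *)$ (finite by compactness and continuity of the action), one has $d(d \cdot q, K \cdot q) \leq \tau(r)$ for every $d \in D$ and every $q \in M$. To prove this, apply Property (S) with $p_0 := *$, $p := d^{-1} \cdot *$ (so that $d(p, p_0) = d(d \cdot *, *) \leq r$ by $G$-invariance of the metric), and the given $q$. The resulting $g \in G$ satisfies $g \cdot d^{-1} \cdot * = *$, whence $g = k_0 d$ for some $k_0 \in K$, together with $d(q, k_0 d \cdot q) \leq \tau(r)$; applying the isometry $k_0^{-1}$ yields $d(k_0^{-1} \cdot q, d \cdot q) \leq \tau(r)$, so $d(d \cdot q, K \cdot q) \leq \tau(r)$. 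I expect identifying the right form of this application — in particular, choosing $p = d^{-1} \cdot *$ rather than $p = d \cdot *$ so that the factor $k_0$ lands in $K$ on the correct side of $d$ — to be the principal obstacle of the whole argument.

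With this lemma in hand, part \eqref{item:GM:SNSDstar} reduces to the implication not already covered by Theorem \ref{theorem:GM}\eqref{item:GM:SNSD}, namely $H_1 \sim H_2$ in $G$ implies $S^{H_1}(*) \sim S^{H_2}(*)$ in $M$. Assume $H_1 \subset D \cdot H_2 \cdot D^{-1}$. A typical point of $S^{H_1}(*)$ has the form $k h_1 \cdot *$ with $h_1 = d_1 h_2 d_2^{-1}$, $d_1, d_2 \in D$, $h_2 \in H_2$. By $G$-invariance of $d$,
\[
d(k d_1 h_2 d_2^{-1} \cdot *, \, k d_1 h_2 \cdot *) = d(d_2^{-1} \cdot *, *) \leq r,
\]
and the key lemma applied to $q := h_2 \cdot *$ furnishes $k_0 \in K$ with $d(d_1 h_2 \cdot *, \, k_0^{-1} h_2 \cdot *) \leq \tau(r)$, whence $d(k d_1 h_2 \cdot *, \, k k_0^{-1} h_2 \cdot *) \leq \tau(r)$. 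The triangle inequality then yields $k h_1 \cdot * \in \overline{N}_{r+\tau(r)}(S^{H_2}(*))$, since $k k_0^{-1} h_2 \cdot * \in K \cdot H_2 \cdot * = S^{H_2}(*)$. Swapping the roles of $H_1$ and $H_2$ completes the proof.

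For part \eqref{item:GM:PPGimplCCMstar}, Theorem \ref{theorem:GM}\eqref{item:GM:PPGimplCCM} already gives one direction, so I must show that if $(S^{H_1}(*), S^{H_2}(*))$ is HBI in $M$, then $(H_1, H_2)$ is proper in $G$. By Proposition \ref{proposition:LLHBI}, it suffices to verify that $H_1 \cap (D H_2 D^{-1})$ is relatively compact in $G$ for every compact $D$. For any $g = d_1 h_2 d_2^{-1} \in H_1 \cap (D H_2 D^{-1})$, the same chain of estimates from the previous paragraph (without the leading $k$) gives $\pi(g) \in S^{H_1}(*) \cap \overline{N}_{r+\tau(r)}(S^{H_2}(*))$. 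The HBI hypothesis together with Proposition \ref{proposition:CI} forces this set to be bounded in the proper metric space $M$, and hence relatively compact. Since $\pi : G \to M$ is continuous and proper, $H_1 \cap (D H_2 D^{-1})$ lies inside the preimage under $\pi$ of a compact subset of $M$, and is therefore relatively compact in $G$, as required.
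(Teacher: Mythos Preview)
Your proof is correct and follows essentially the same route as the paper. Your pointwise key lemma (that $d(d \cdot q, K \cdot q) \leq \tau(r)$ for $d \in D$) is exactly the content of the paper's Lemma~\ref{lemma:SNNS}, $S^{H}(\overline{N}_r(*)) \subset \overline{N}_{\tau}(S^{H}(*))$, phrased elementwise rather than as a set inclusion; the chain of estimates in your part~\eqref{item:GM:SNSDstar} matches the paper's almost line for line. The only notable difference is in part~\eqref{item:GM:PPGimplCCMstar}: the paper deduces it from part~\eqref{item:GM:SNSDstar} (using $H_i' \sim H_i \Rightarrow S^{H_i'}(*) \sim S^{H_i}(*)$ and then the definition of HBI), whereas you re-run the same estimate directly on $H_1 \cap D H_2 D^{-1}$ and invoke Proposition~\ref{proposition:LLHBI} --- a slightly more self-contained but entirely equivalent argument.
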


The following lemma plays a key role in our proof of 
Theorem \ref{theorem:GMstar}.

\begin{lemma}\label{lemma:SNNS}
Let $H$ be a non-empty closed subset of $G$.
Then for each $r \geq 0$, there exists $\tau \geq 0$ such that 
\[
S^{H}(\overline{N}_r(*)) \subset \overline{N}_{\tau}(S^{H}(*)).
\]
\end{lemma}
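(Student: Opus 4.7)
The plan is to take an arbitrary point of $S^H(\overline{N}_r(*))$ and produce, via Property~(S), a nearby point of $S^H(*) = K \cdot H \cdot *$ whose distance from the original point is controlled by a constant depending only on $r$. Unfolding the definition, any such point has the form $x = g \cdot h \cdot *$ with $h \in H$ and $g \in \pi^{-1}(\overline{N}_r(*))$; the latter condition says precisely that $d(g \cdot *, *) \leq r$.

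The key step is to apply Property~(S) to the carefully chosen triple $(p_0, p, q) := (*, \, g \cdot *, \, x)$. The required hypothesis $d(p, p_0) \leq r$ holds by construction, so Property~(S) yields an isometry $g_0 \in G$ with $g_0 \cdot (g \cdot *) = *$ and $d(x, g_0 \cdot x) \leq \tau(r)$. The first condition forces $k := g_0 g$ to lie in the isotropy subgroup $K$; consequently
\[
g_0 \cdot x \;=\; g_0 g h \cdot * \;=\; k h \cdot * \;\in\; K \cdot H \cdot * \;=\; S^H(*).
\]
Combined with the distance bound, this exhibits a point of $S^H(*)$ within distance $\tau(r)$ of $x$, so the conclusion of the lemma holds with $\tau := \tau(r)$, the constant furnished by Property~(S) for the given $r$.

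The whole argument is a single, well-chosen application of Property~(S), and I do not anticipate any substantive obstacle. The only subtlety is the choice to take $q = x$ itself rather than, say, $q = h \cdot *$: this ensures that the isometry $g_0$ both maps $x$ into $K \cdot H \cdot *$ (because $g_0 g \in K$) and displaces $x$ by at most $\tau(r)$. Note that closedness of $H$ plays no role in this argument, and the bound $\tau = \tau(r)$ is independent of $H$.
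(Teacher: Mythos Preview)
Your argument is correct and is essentially identical to the paper's own proof: both take an arbitrary point $g\cdot h\cdot *$ with $g\cdot *\in\overline{N}_r(*)$, apply Property~(S) to the triple $(*,\,g\cdot *,\,g\cdot h\cdot *)$ to obtain an isometry $g_0$ with $g_0g\in K$ and $d(g\cdot h\cdot *,\,g_0g\cdot h\cdot *)\le\tau(r)$, and conclude. Your remark that closedness of $H$ is not used is also accurate.
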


\begin{proof}[Proof of Lemma \ref{lemma:SNNS}]
Let us fix $\tau = \tau(r) \geq 0$ as in Property (S).
Take any $q \in S^{H}(\overline{N}_r(*))$, 
and we shall prove $q \in \overline{N}_{\tau}(S^{H}(*))$.
Fix $g \in \pi^{-1}(\overline{N}_r(*))$ and $h \in H$ with $q = g \cdot \pi(h)$.
We put $p := \pi(g) \in \overline{N}_r(*)$.
Then by the definition of $\tau$,
there exists $g' \in G$ such that $g' \cdot p = *$ and $d(q,g' \cdot q) \leq \tau$.
We put $q_0 := g' \cdot q \in M$.
Since 
\begin{align*}
\begin{cases}
    (g')^{-1} \cdot * &= p = g \cdot *,  \\
    (g')^{-1} \cdot q_0 &= q = g \cdot h \cdot *, 
\end{cases}
\end{align*}
we obtain $k := g' g \in K$ and $q_0 = k \cdot h \cdot * \in S^{H}(*)$.
This proves that $q \in \overline{N}_{\tau}(S^{H}(*))$.
\end{proof}

Let us give a proof of Theorem \ref{theorem:GMstar} as below.

\begin{proof}[Proof of Theorem \ref{theorem:GMstar}]
First, we discuss the claim \eqref{item:GM:SNSDstar} in 
Theorem \ref{theorem:GMstar}.
The ``if''-part of the claim is just Theorem \ref{theorem:GM} \eqref{item:GM:SNSD}.
We show the ``only if''-part.
Assume $H_1 \sim H_2$ in $G$ and we shall prove $S^{H_1}(*) \sim S^{H_2}(*)$ in $M$.
%Recall that $S^{H_i}(*) = K \cdot \pi(H_i)$ for $i = 1,2$.
Fix a compact subset $D$ of $G$ with $H_1 \subset \overline{N}_D(H_2)$ and $H_2 \subset \overline{N}_{D}(H_1)$.
We define the subset $D'$ of $G$ by $D' := D \cdot K$.
Since $D^{-1}$ and $D'$ are both compact, 
one can find and do take $r_1,r_2 \geq 0$ with 
\[
\pi(D^{-1}) \subset \overline{N}_{r_1}(*) \text{ and }
\pi(D') \subset \overline{N}_{r_2}(*).
\]
Take $\tau \geq 0$ as in Lemma \ref{lemma:SNNS} for $r = r_2$, and put $r' := r_1 + \tau \geq 0$.
Then for $(i,j) = (1,2)$ or $(2,1)$, we obtain 
\begin{align*}
    S^{H_i}(*) 
        &= K \cdot \pi(H_i) \\
        &\subset K \cdot \pi(N_D(H_j)) \\
        &= K \cdot \pi(D \cdot H_j \cdot D^{-1}) \\
        &= D' \cdot H_j \cdot \pi(D^{-1}) \\
        &\subset D' \cdot H_j \cdot \overline{N}_{r_1}(*) \\
        &= \overline{N}_{r_1}(D' \cdot H_j \cdot *) \\
        &\subset \overline{N}_{r_1}(\pi^{-1}(\overline{N}_{r_2}(*)) \cdot \pi(H_j)) \\
        &= \overline{N}_{r_1}(S^{H_j}(\overline{N}_{r_2}(*))) \quad (\because \text{ the definition of } S^{H_j}(\overline{N}_{r_2}(*)))\\
        &\subset \overline{N}_{r_1} \overline{N}_{\tau} (S^{H_j}(*)) \quad (\because \text{ Lemma } \ref{lemma:SNNS})\\
        &\subset \overline{N}_{r'} (S^{H_j}(*)).
\end{align*}
This proves $S^{H_1}(*) \sim S^{H_2}(*)$ in $M$. 

Next, we shall give a proof of the claim \eqref{item:GM:PPGimplCCMstar} in Theorem \ref{theorem:GMstar}.
The ``only if''-part of the claim is just Theorem \ref{theorem:GM} \eqref{item:GM:PPGimplCCM}.
Let us prove the ``if''-part.
Assume that 
the pair $(S^{H_1}(*),S^{H_2}(*))$ is HBI in $M$.
Take any $H_1' \in [H_1]$ and any $H_2' \in [H_2]$.
We shall prove that the intersection $H_1' \cap H_2'$ is relatively compact in $G$.
Since $\pi : G \rightarrow M$ is a continuous proper surjective map and $(M,d)$ is a proper metric space, we only need to show that 
the image $\pi(H_1' \cap H_2')$ is bounded in $M$.
Since 
\begin{align*}
    \pi(H_1' \cap H_2') 
        &\subset \pi(H_1') \cap \pi(H_2') \\
        &\subset (K \cdot \pi(H_1')) \cap (K \cdot \pi(H_2')) \\
        &= S^{H_1'}(*) \cap S^{H_2'}(*), 
\end{align*}
it is enough to show that 
the intersection $S^{H_1'}(*) \cap S^{H_2'}(*)$ is bounded in $M$.
Recall that the claim \eqref{item:GM:SNSDstar} in Theorem \ref{theorem:GMstar} has been proved above.
Thus, we have $S^{H_i'}(*) \in [S^{H_i}(*)]$ for $i = 1,2$.
Therefore, by the assumption, 
the intersection $S^{H_1'}(*) \cap S^{H_2'}(*)$ is bounded in $M$.
\end{proof}

As in Section \ref{subsection:Korbits},
we consider the proper metric space $(K \backslash M,d^K)$,
the quotient map  
\[
\varpi : M \rightarrow K \backslash M,
\]
and define 
\[
\mu := \varpi \circ \pi : G \rightarrow K \backslash M, ~ g \mapsto K \cdot (g \cdot *)
\]
Note that, as we mentioned in Section \ref{subsection:Korbits}, 
Theorem \ref{theorem:OmOm} and 
Corollary \ref{corollary:bijTheta} 
can be applied for $\varpi : M \rightarrow K \backslash M$.

To prove Theorem \ref{theorem:main}, 
it is enough to show the following theorem:

\begin{theorem}\label{theorem:mainlong}
In the setting above, 
let $H_1$ and $H_2$ be both non-empty subsets of $G$.
\begin{enumerate}[(1)]
    \item \label{item:main1} The following five conditions on $(H_1,H_2)$ are equivalent:
    \begin{enumerate}[(\textrm{1}i)]
    \item \label{item:main:pitch:G} The pair $(H_1,H_2)$ is proper in $G$.
    \item \label{item:main:pitch:M} The pair $(S^{H_1}(*), S^{H_2}(*))$ is HBI in $M$.
    \item \label{item:main:pitch:M1} The pair $(\pi(H_1), S^{H_2}(*))$ is HBI in $M$.
    \item \label{item:main:pitch:M2} The pair $(S^{H_1}(*), \pi(H_2))$ is HBI in $M$.
    \item \label{item:main:pitch:KM} The pair $(\mu(H_1), \mu(H_2))$ is HBI in $K \backslash M$.
    \end{enumerate}
\item \label{item:main2} The following six conditions on $(H_1,H_2)$ are equivalent: 
\begin{enumerate}[(\textrm{2}i)]
    \item \label{item:main:sim:G} $H_1 \sim H_2$ in $G$.
    \item \label{item:main:sim:M} $S^{H_1}(*) \sim S^{H_2}(*)$ in $M$.
    \item \label{item:main:sim:KM} $\mu(H_1) \sim \mu(H_2)$ in $K \backslash M$.
    \item \label{item:main:sim:PPG} $\pitchfork(H_1:G) =  \pitchfork(H_2:G)$.
    \item \label{item:main:sim:CCM} $\mathcal{HBI}(S^{H_1}(*):M) = \mathcal{HBI}(S^{H_2}(*):M)$.
    \item \label{item:main:sim:CCKM} $\mathcal{HBI}(\mu(H_1):K \backslash M) = \mathcal{HBI}(\mu(H_2):K \backslash M)$. 
\end{enumerate}
\end{enumerate}    
\end{theorem}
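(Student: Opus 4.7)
The plan is to deduce Theorem \ref{theorem:mainlong} by stitching together three ingredients that are already available: Theorem \ref{theorem:GMstar}, which, using Property (S), transfers properness and $\sim$-equivalence from $G$ down to $M$ via $\pi$; Theorem \ref{theorem:OmOm} (together with Corollary \ref{corollary:bijTheta}), which transfers HBI-pairs and $\sim$-relations between $M$ and $K\backslash M$ via the quotient map $\varpi$; and the two dualities recorded in Propositions \ref{proposition:LHHBIdual} and \ref{proposition:CCdual}. No essentially new geometric input is required at this stage.

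The key identifications are that, for $C_i := \pi(H_i) \subset M$, one has
\[
\Theta(C_i) = \varpi^{-1}(\varpi(\pi(H_i))) = K \cdot \pi(H_i) = S^{H_i}(*), \qquad \varpi(C_i) = \mu(H_i).
\]
In particular $S^{H_i}(*) \in \mathcal{P}^\times(M)^{\Theta}$. For Part (1), the equivalence of the four metric conditions (1ii), (1iii), (1iv), (1v) is then a direct transcription of Theorem \ref{theorem:OmOm} (3) applied to $\eta = \varpi$ with $C_i = \pi(H_i)$, once one recalls (as in Section \ref{subsection:Korbits}) that $\varpi : M \to K\backslash M$ satisfies Conditions (1) and (2). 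The final link (1i) $\Leftrightarrow$ (1ii) is exactly Theorem \ref{theorem:GMstar} (2).

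For Part (2), I would read off (2i) $\Leftrightarrow$ (2ii) from Theorem \ref{theorem:GMstar} (1). Using the same identifications, Theorem \ref{theorem:OmOm} (2) applied with $C_i = \pi(H_i)$ gives (2ii) $\Leftrightarrow$ (2iii) directly, since $\Theta(\pi(H_i)) = S^{H_i}(*)$ and $\varpi(\pi(H_i)) = \mu(H_i)$. The remaining three equivalences are instances of the duality principles: Proposition \ref{proposition:metric_sigma_compact} guarantees that $G$ is $\sigma$-compact, so Proposition \ref{proposition:LHHBIdual} yields (2i) $\Leftrightarrow$ (2iv); and Proposition \ref{proposition:CCdual}, applied in the proper metric spaces $M$ and $K\backslash M$ respectively, delivers (2ii) $\Leftrightarrow$ (2v) and (2iii) $\Leftrightarrow$ (2vi).

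The main obstacle is organizational rather than technical: all substantive work has already been done, most notably in Theorem \ref{theorem:GMstar}, where Property (S) is used through Lemma \ref{lemma:SNNS}. What remains is to keep careful track of which condition lives in $G$, in $M$, or in $K\backslash M$, and to invoke the correct $\Theta$-invariance (guaranteed automatically because $S^{H_i}(*)$ is already $\Theta$-fixed) when switching between the three levels. I would present the proof as two short diagrams of implications, one for Part (1) and one for Part (2), annotating each arrow with the previously established lemma, proposition, or theorem that justifies it, and deriving Theorem \ref{theorem:main} as the immediate restriction of the corresponding diagrams to conditions (1i), (1ii), (1v) and (2i), (2ii), (2iii), (2iv), (2vi).
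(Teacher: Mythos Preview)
Your proposal is correct and follows essentially the same approach as the paper: both organize the proof as two diagrams of equivalences, use Theorem~\ref{theorem:GMstar} for the link between $G$ and $M$, apply Theorem~\ref{theorem:OmOm} with $(\Omega,\Omega',\eta,C_1,C_2)=(M,K\backslash M,\varpi,\pi(H_1),\pi(H_2))$ for the passage to $K\backslash M$, and invoke Propositions~\ref{proposition:LHHBIdual} and~\ref{proposition:CCdual} for the duality equivalences. Your explicit identification $\Theta(\pi(H_i))=S^{H_i}(*)$ and $\varpi(\pi(H_i))=\mu(H_i)$ is exactly the observation the paper uses (phrased there as $\mu(H)=\varpi(S^H(*))$), and your remark that $\sigma$-compactness of $G$ comes from Proposition~\ref{proposition:metric_sigma_compact} makes explicit a point the paper leaves implicit.
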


\begin{proof}[Proof of Theorem \ref{theorem:mainlong}]
Our strategy is to prove the following equivalences:
\[
\xymatrix{& \eqref{item:main:pitch:G}  \ar@{<=>}[d] &
 \\ \eqref{item:main:pitch:M1} & \eqref{item:main:pitch:M} \ar@{<=>}[d] \ar@{<=>}[r] \ar@{<=>}[l] & \eqref{item:main:pitch:M2} \\ & \eqref{item:main:pitch:KM} &\\} \quad \quad 
 \xymatrix{\eqref{item:main:sim:G} \ar@{<=>}[d] \ar@{<=>}[r] & \eqref{item:main:sim:PPG} 
 \\ \eqref{item:main:sim:M} \ar@{<=>}[r] \ar@{<=>}[d] & \eqref{item:main:sim:CCM}
 \\ \eqref{item:main:sim:KM} \ar@{<=>}[r] & \eqref{item:main:sim:CCKM}
}
\]

First, both of the equivalences \eqref{item:main:sim:M} $\Leftrightarrow$
\eqref{item:main:sim:CCM} and \eqref{item:main:sim:KM} $\Leftrightarrow$
\eqref{item:main:sim:CCKM} come from Proposition \ref{proposition:CCdual}. 
Furthermore, 
the equivalence \eqref{item:main:sim:G} $\Leftrightarrow$ \eqref{item:main:sim:PPG} is followed by Proposition \ref{proposition:LHHBIdual}.
Next, the equivalence \eqref{item:main:sim:M}
$\Leftrightarrow$ \eqref{item:main:sim:KM} 
can be proved by applying Theorem \ref{theorem:OmOm} \eqref{item:OmOm:ThetaCthetaC} for  
\[
(\Omega,\Omega',\eta,C_1,C_2) = 
(M, K \backslash M, \varpi, \pi(H_1),\pi(H_2))
\]
since $\mu(H) = \varpi(S^{H}(*))$ for each non-empty subset $H$ of $G$.
By the similar arguments, 
the equivalences among the conditions 
\eqref{item:main:pitch:M}, 
\eqref{item:main:pitch:M1},  
\eqref{item:main:pitch:M2} and 
\eqref{item:main:pitch:KM} are followed by Theorem \ref{theorem:OmOm} \eqref{item:OmOm:HBI}.
Both of the equivalences 
\eqref{item:main:pitch:G} $\Leftrightarrow$ \eqref{item:main:pitch:M} 
and 
\eqref{item:main:sim:M} $\Leftrightarrow$ \eqref{item:main:sim:G} 
are already proved as Theorem \ref{theorem:GMstar}.
\end{proof}

Let us consider the setting in Theorem \ref{theorem:main_section}.
Note that as we mentioned in Section \ref{subsection:Korbits}, 
Theorem \ref{theorem:OmOm} and 
Corollary \ref{corollary:bijTheta} 
can be applied to the map $\eta : \Sigma \rightarrow K \backslash M$.

We give a proof of Theorem \ref{theorem:main_section} by applying the following observations below:

\begin{lemma}\label{lemma:Ximu}
For a non-empty subset $H$ of $G$, 
the two equations below holds:
\begin{align*}
\Sigma(H) &= \eta^{-1}(\mu(H)) \text{ in } \Sigma, \\
\mu(H) &= \eta(\Sigma(H)) \text{ in } K \backslash M.
\end{align*}
\end{lemma}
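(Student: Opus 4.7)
The plan is to unfold the three definitions involved---$\Sigma(H) = \Sigma \cap (K \cdot H \cdot *)$, $\mu(H) = \varpi(\pi(H)) = \{K \cdot (h \cdot *) \mid h \in H\}$, and $\eta(p) = K \cdot p$ for $p \in \Sigma$---and then check the two set equalities by direct element-chasing. Both equalities are essentially formal consequences of the definitions, together with the one substantive input: the hypothesis that every $K$-orbit in $M$ meets the closed subset $\Sigma$.

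First I would verify $\Sigma(H) = \eta^{-1}(\mu(H))$. By definition of $\eta$ and $\mu$, a point $p \in \Sigma$ lies in $\eta^{-1}(\mu(H))$ exactly when $K \cdot p = K \cdot (h \cdot *)$ for some $h \in H$, equivalently when $p \in K \cdot h \cdot *$ for some $h \in H$. This is literally the condition $p \in \Sigma \cap (K \cdot H \cdot *)$ that defines $\Sigma(H)$. Applying $\eta$ to the equality $\Sigma(H) = \eta^{-1}(\mu(H))$ then gives $\eta(\Sigma(H)) = \eta(\eta^{-1}(\mu(H))) = \mu(H) \cap \eta(\Sigma)$, so the second equality reduces to the surjectivity statement $\mu(H) \subseteq \eta(\Sigma)$.

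That surjectivity is exactly where the assumption on $\Sigma$ is used: given $K \cdot (h \cdot *) \in \mu(H)$, the hypothesis that $\Sigma$ meets every $K$-orbit lets me pick some $p \in \Sigma \cap (K \cdot h \cdot *)$, and then $\eta(p) = K \cdot p = K \cdot (h \cdot *)$, with $p \in \Sigma(H)$ by construction. The reverse inclusion $\eta(\Sigma(H)) \subseteq \mu(H)$ is immediate from the first equality.

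There is no real obstacle here; the lemma is purely bookkeeping. The only point worth flagging is that the second (minimum-distance) hypothesis on $\Sigma$ from Section~\ref{subsection:Korbits} plays no role in this particular statement---it is only the orbit-meeting property that is needed---so I would note this explicitly to keep the reader oriented for the applications that follow.
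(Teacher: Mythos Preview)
Your proof is correct. In the paper this lemma is stated as an ``observation'' with no proof given, so your direct unfolding of the definitions is exactly the verification the authors leave to the reader; your remark that only the orbit-meeting hypothesis on $\Sigma$ (and not the minimum-distance condition) is used here is also accurate and worth keeping.
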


\begin{proof}[Proof of Theorem \ref{theorem:main_section}]
Theorem \ref{theorem:main_section}
comes immediately from 
Theorem \ref{theorem:mainlong},  
Lemma \ref{lemma:Ximu}
and 
applying 
Theorem \ref{theorem:OmOm} \eqref{item:OmOm:HBI}
and 
Corollary \ref{corollary:bijTheta}
for 
 \[
 (\Omega,\Omega',\eta,C_1,C_2) = (\Sigma,K \backslash M, \eta, \Sigma(H_1),\Sigma(H_2)).
 \]
\end{proof}

\section{CAT(0) spaces and Property (S)}\label{section:sufficientP}

Let $G$ be a group and $M$ a proper metric space equipped with an isometric transitive $G$-action.
In this section, in terms of geometry on CAT(0) spaces, 
we give a sufficient condition for 
property (S), stated in Section \ref{section:introduction}, of the metric $G$-space $M$ (see Theorem \ref{theorem:SinCATzero} and Corollary \ref{corollary:Ptransitive}).

\subsection{Notations for CAT(0) spaces}\label{subsection:notationCATzero}

In this subsection, 
following \cite{BridsonHaefligerMetricSpaces} and \cite{Caprace2014Lectures}, 
we fix our terminologies for CAT(0) spaces.

Let $(\Omega,d)$ be a metric space.
For $-\infty < a < b < \infty$, 
an isometric map $c : [a,b] \rightarrow M$ is called a geodesic segment joining the points $c(a)$ and $c(b)$ in $\Omega$.
A metric space $(\Omega,d)$ is said to be a geodesic metric space 
if any two points of $\Omega$ can be joined by a geodesic segment in $\Omega$.

Let $(\Omega,d)$ be a geodesic metric space.
For each triple $(x_1,x_2,x_3) \in \Omega \times \Omega \times \Omega$, 
a triple $(x_1',x_2',x_3') \in \R^2 \times \R^2 \times \R^2$ 
is called a Euclidean comparison triangle 
for $(x_1,x_2,x_3)$ if $d(x_i,x_j) = d_{\R^2}(x_i',x_j')$ for any $i,j = 1,2,3$,
where $d_{\R^2}$ denotes the standard metric on the Euclidean plane $\R^2$.
Note that for any triple $(x_1,x_2,x_3)$ of points in $\Omega$, 
a Euclidean comparison triangle for $(x_1,x_2,x_3)$ in $\R^2$ exists uniquely up to congruence.
A geodesic metric space $(\Omega,d)$ is called CAT(0) if 
for 
any triple $(x_1,x_2,x_3) \in \Omega \times \Omega \times \Omega$,
any point $p$ on any geodesic segment joining $x_2$ and $x_3$, 
and any Euclidean comparison triangle $(x_1',x_2',x_3')$ for $(x_1,x_2,x_3)$, 
the inequality below holds:
\[
d(x_1,p) \leq d_{\R^2}(x_1',p')
\]
where $p'$ denotes the unique point in $\R^2$ with $d_{\R^2}(x_2',p') = d(x_2,p)$ and $d_{\R^2}(x_3',p') = d(x_3,p)$.

Recall that a metric space $(\Omega,d)$ is called 
\begin{itemize}
    \item metrically-complete if any Cauchy sequence of points in $\Omega$ has a limit in $\Omega$, and 
    \item geodesically-complete if any geodesic segment can be extended to a geodesic line $\R \rightarrow \Omega$.
\end{itemize}

For a locally-compact CAT(0) space $(\Omega,d)$, the followings are known (see \cite{Caprace2014Lectures}):
\begin{itemize}
    \item $(\Omega,d)$ is proper as a metric space if and only if it is metrically-complete, 
    \item $(\Omega,d)$ is geodesically-complete, then it is proper.
    \item $(\Omega,d)$ is homeomorphic to a finite dimensional manifold, then it is geodesically-complete (see \cite[Proposition 5.1.2 in Chapter II]{BridsonHaefligerMetricSpaces}).
\end{itemize}

We also introduce the boundary $\partial \Omega$ at infinity of $(\Omega,d)$ in the sense of \cite[Chapter II.8]{BridsonHaefligerMetricSpaces}.
An isometric map $c : [0,\infty) \rightarrow \Omega$ 
is called a geodesic ray issuing from a point $c(0) \in \Omega$.
Two geodesic rays $c, c' : [0,\infty) \rightarrow \Omega$ are said to be asymptotic, or equivalent, if 
there exists a constant $a \geq 0$ such that $d(c(t),c'(t)) \leq a$ for any $t \in [0,\infty)$. 
The set of all equivalent classes of geodesic rays is denoted by $\partial \Omega$.
The equivalent class of a geodesic ray $c$ is written as $c(\infty) \in \partial \Omega$.
Note that each isometry $\gamma$ on $\Omega$ causes a natural permutation on the set $\partial \Omega$. 
In particular, each isometric action on $\Omega$ by a group $G$ can be extended to a $G$-action on the set $\Omega^{-} := \Omega \sqcup \partial \Omega$.

\begin{remark}
The boundary $\partial \Omega$ and the union $\Omega^{-} := \Omega \sqcup \partial \Omega$ admit some important topologies and metrics.
For example, if $(\Omega,d)$ is a proper CAT(0) space 
and we consider the cone topology on $\Omega^{-} := \Omega \sqcup \partial \Omega$,
then $\Omega^{-}$ gives a compactification of $\Omega$. See \cite[Chapter II]{BridsonHaefligerMetricSpaces} for the details.
\end{remark}

\subsection{A sufficient condition for Property (S) of metric homogeneous $G$-spaces}

Let $(\Omega,d)$ be a proper (and thus, metrically-complete) CAT(0) space 
equipped with an isometric action of a group $G$,
and $M$ a closed $G$-orbit of $\Omega$.
Note that $M$ itself is a proper metric space with respect to the restriction of $d$ on $M$.

For each $\xi \in \partial \Omega$, 
we denote by $P_\xi$ the isotropy subgroup of $G$ at $\xi$,
that is,  
\[
P_{\xi} := \{ g \in G \mid g \cdot \xi = \xi \} \subset G.
\]

The following proposition gives a sufficient condition for Property (S) of such a metric $G$-space $M$:

\begin{theorem}\label{theorem:SinCATzero}
In the setting above, 
we assume that the following holds:
one can choose a constant $b \geq 0$ 
such that 
for any $p,q \in M$, 
there exists 
a geodesic ray $c$ in $\Omega$ 
issuing from $p$
such that 
\[
d(q,c) := \min_{t \in [0,\infty)}d(q,c(t)) \leq b
\]
and the $P_{c(\infty)}$-action on $M$ is transitive.
Then the $G$-space $M$ has Property (S). 
\end{theorem}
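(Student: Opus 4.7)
The plan is to exhibit $\tau(r) := r + 2b$ and, given arbitrary $p_0, p, q \in M$ with $d(p, p_0) \leq r$, to construct an isometry $g \in G$ sending $p$ to $p_0$ with $d(q, g \cdot q) \leq r + 2b$. The first step is to apply the hypothesis to the pair $(p, q)$: this produces a geodesic ray $c : [0, \infty) \to \Omega$ issuing from $p$, a parameter $t_0 \geq 0$ with $d(q, c(t_0)) \leq b$, and the transitivity of $P_{\xi}$ on $M$ where $\xi := c(\infty) \in \partial \Omega$. Using this transitivity, I pick $g \in P_{\xi}$ with $g \cdot p = p_0$.

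The heart of the argument is then to control how far $g$ can displace $q$. Since $g$ is an isometry of $\Omega$ fixing $\xi$, the translated ray $g \cdot c$ issues from $g \cdot p = p_0$ and still represents $\xi$ at infinity, so $c$ and $g \cdot c$ are asymptotic geodesic rays in the CAT(0) space $\Omega$. The CAT(0) condition forces the function
\[
f(t) := d(c(t), (g \cdot c)(t))
\]
to be convex on $[0, \infty)$; the asymptoticity of the two rays says exactly that $f$ is bounded; and any bounded convex function on $[0, \infty)$ is non-increasing, as one sees by writing $t$ as a convex combination of a fixed $s < t$ and a large $u$, then letting $u \to \infty$ in the convexity inequality. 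Consequently $f(t) \leq f(0) = d(p, p_0) \leq r$ for every $t \geq 0$. This is the only non-routine ingredient, and it is a standard property of CAT(0) geometry.

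Finally I would conclude by the triangle inequality, using the $g$-invariance of $d$ on the last summand:
\[
d(q, g \cdot q) \leq d(q, c(t_0)) + d(c(t_0), (g \cdot c)(t_0)) + d((g \cdot c)(t_0), g \cdot q) \leq b + r + b,
\]
so $d(q, g \cdot q) \leq 2b + r = \tau(r)$, as required. I do not anticipate any genuine obstacle beyond the convexity step described above; no structure beyond the CAT(0) inequality and the transitivity of $P_\xi$ enters the argument, and the constant $\tau(r)$ depends only on $r$ and the universal constant $b$ from the hypothesis, as Property (S) demands.
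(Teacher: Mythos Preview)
Your proof is correct and follows essentially the same route as the paper's: the same choice $\tau(r)=r+2b$, the same selection of $g\in P_{c(\infty)}$ with $g\cdot p=p_0$, and the same triangle inequality at $t_0$. The only cosmetic difference is that the paper invokes the Bridson--Haefliger fact (stated as a lemma) that asymptotic rays $c,c'$ in a CAT(0) space satisfy $d(c(t),c'(t))\le d(c(0),c'(0))$, whereas you derive this inline from convexity and boundedness of $t\mapsto d(c(t),(g\cdot c)(t))$.
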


We give a proof of Theorem \ref{theorem:SinCATzero} by applying the following lemma:

\begin{lemma}[{\cite[Proposition 8.2 and its proof in Chapter II.8]{BridsonHaefligerMetricSpaces}}]\label{lemma:asymptoticCAT0}
Let us fix any $p,p' \in \Omega$
and any geodesic ray $c$ in $\Omega$ issuing from $p$.
Then there uniquely exists a geodesic ray $c'$ in $\Omega$ issuing from $p'$ asymptotic to $c$.
Furthermore, for any $t \in [0,\infty)$, 
\[
d(c(t),c'(t)) \leq d(p,p').
\]
\end{lemma}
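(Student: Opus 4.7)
The plan is to construct $c'$ as a limit of geodesic segments from $p'$ to the points $c(n)$, and to use convexity of the distance function along pairs of geodesics---the fundamental analytic feature of CAT(0) spaces---both for the limit argument and for the metric bound and uniqueness.

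First I would fix, for each $n \in \Z_{\geq 1}$, the unit-speed geodesic segment $c_n \colon [0, a_n] \to \Omega$ from $p'$ to $c(n)$, where $a_n := d(p', c(n))$; existence and uniqueness of such a segment come from the CAT(0) inequality together with metric completeness of $\Omega$ (implied by properness). The triangle inequality gives $|a_n - n| \leq d(p, p')$, so in particular $a_n / n \to 1$.

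For the existence of the limit $c'$, I would show that $\{c_n(t)\}_n$ is Cauchy for each fixed $t \geq 0$. Applied to the Euclidean comparison triangle for $(p', c(n), c(m))$ with $m > n$, the law of cosines together with $|a_n - n|, |a_m - m| \leq d(p,p')$ and $d(c(n), c(m)) = m - n$ forces the comparison angle at $\bar{p'}$ to tend to $0$. The CAT(0) inequality then gives $d(c_n(t), c_m(t)) \leq 2 t \sin(\theta_{n,m}/2) \to 0$, so $c'(t) := \lim_n c_n(t)$ exists by completeness. Because each $c_n$ is isometric on $[0, a_n]$ and $a_n \to \infty$, the limit $c'$ is isometric on $[0, \infty)$, hence a geodesic ray. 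For the distance estimate, I would apply the CAT(0) convexity of the distance function to the pair of affinely reparametrized geodesics $\alpha(s) := c(ns)$ and $\beta(s) := c_n(a_n s)$ on $[0,1]$, both of which end at $c(n)$; this yields $d(c(t), c_n(a_n t/n)) \leq (1 - t/n) d(p, p')$, and combined with the isometric bound $d(c_n(a_n t/n), c_n(t)) \leq (t/n) d(p, p')$ I obtain $d(c(t), c_n(t)) \leq d(p, p')$. Passing to the limit gives $d(c(t), c'(t)) \leq d(p, p')$ for all $t \geq 0$, which in particular shows $c'$ is asymptotic to $c$.

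For uniqueness, if $c'_1$ and $c'_2$ are two geodesic rays from $p'$ both asymptotic to $c$, then $t \mapsto d(c'_1(t), c'_2(t))$ is convex (again by CAT(0) applied to these geodesic rays), vanishes at $t = 0$, and is bounded on $[0, \infty)$ by asymptoticity combined with the triangle inequality; a bounded non-negative convex function on $[0, \infty)$ vanishing at $0$ must be identically $0$, so $c'_1 \equiv c'_2$. The step I expect to require the most care is the comparison estimate in the Cauchy argument: one has to quantify how the bounded deviation $|a_n - n| = O(1)$ at scale $n$ forces the Euclidean comparison angle at $\bar{p'}$ to zero, which is a straightforward but fiddly asymptotic expansion of the law of cosines.
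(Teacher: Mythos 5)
The paper does not prove this lemma but quotes it directly from Bridson--Haefliger (Proposition~8.2 of Chapter~II.8 and its proof), and your argument is correct and essentially reproduces the standard proof given there: construct $c'$ as the limit of the geodesic segments $c_n$ from $p'$ to $c(n)$, establish the Cauchy property via the comparison-angle estimate $1-\cos\theta_{n,m}=\bigl((m-n)^2-(a_m-a_n)^2\bigr)/(2a_na_m)\to 0$, obtain the bound $d(c(t),c'(t))\leq d(p,p')$ from convexity of the distance function along the two geodesics ending at $c(n)$, and deduce uniqueness from convexity plus boundedness of $t\mapsto d(c'_1(t),c'_2(t))$. I see no gaps (only a cosmetic point: existence of the segments $c_n$ is already part of the definition of a geodesic metric space, while completeness is what you actually need for the limit to exist).
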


\begin{proof}[Proof of Theorem \ref{theorem:SinCATzero}]
    Take $b \geq 0$ as in the assumption.
    Fix any $r \geq 0$.
    We put $\tau(r) := r + 2b$.
    Take any $p_0,p,q \in M$ with $d(p,p_0) \leq r$.
    Our goal is to prove that there exists $g \in G$ such that $g \cdot p = p_0$ and $d(q,g \cdot q) \leq \tau(r) = r + 2b$.
    By the assumption, one can find and do take a geodesic ray $c$ in $\Omega$ issuing from $p$ such that $d(q,c) \leq b$ and the $P_{c(\infty)}$-action on $M$ is transitive.
    Let us fix $t_0 \in [0,\infty)$ with $d(q,c(t_0)) \leq b$ and $g \in P_{c(\infty)}$ such that $g \cdot p = p_0$.
    Put $q_0 := g \cdot q \in M$.
    We only need to show that $d(q,q_0) \leq r + 2b$.
    Let us consider the geodesic ray $g \cdot c$ issuing from $p_0$ in $\Omega$ defined by 
    \[
    g \cdot c : [0,\infty) \rightarrow \Omega, ~ t \mapsto g \cdot c(t).
    \]
    Since $g \in P_{c(\infty)}$, we have that $g \cdot c$
    is asymptotic to $c$.
    Thus, by Lemma \ref{lemma:asymptoticCAT0},
    \[
    d(c(t_0),(g \cdot c)(t_0)) \leq d(p,p_0) \leq r.
    \]
    We note that 
    \[
    d((g \cdot c)(t_0),q_0) = d(c(t_0),q) \leq b.
    \]
    Therefore, we obtain 
    \begin{align*}
        d(q,q_0) 
            &\leq d(q,c(t_0)) + d(c(t_0),(g \cdot c)(t_0)) + d((g \cdot c)(t_0),q_0) \\
            &\leq r + 2b.
    \end{align*}
    This completes the proof.
\end{proof}

We also obtain the following corollary: 

\begin{corollary}\label{corollary:Ptransitive}
    Let $M$ be a geodesically-complete CAT(0) space equipped with an isometric transitive action of a group $G$.
    Assume that for each $\xi \in \partial M$, the isotropy subgroup $P_{\xi}$ of $G$ at $\xi$ acts on $M$ transitively.
    Then the metric $G$-space $M$ has Property (S).
\end{corollary}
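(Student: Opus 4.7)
The plan is to deduce this as a direct application of Theorem~\ref{theorem:SinCATzero}, taking the ambient proper CAT(0) space $\Omega$ of that theorem to be $M$ itself and the constant $b$ to be $0$.

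First, I would verify that the setting of Theorem~\ref{theorem:SinCATzero} is in place. Since $M$ is a geodesically-complete CAT(0) space, it is proper by the observations recalled in Section~\ref{subsection:notationCATzero}. Viewing $M$ as a closed $G$-orbit inside the proper CAT(0) space $\Omega := M$ (with the single orbit being all of $M$, which is legitimate because the $G$-action is transitive), the preliminaries of Theorem~\ref{theorem:SinCATzero} are satisfied.

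Next, I would check the main hypothesis of Theorem~\ref{theorem:SinCATzero} with the constant $b = 0$. Given any two points $p, q \in M$, the CAT(0) structure gives a geodesic segment joining $p$ to $q$, and geodesic completeness extends this segment to a geodesic line; restricting that line to a half-line based at $p$ produces a geodesic ray $c : [0,\infty) \to M$ issuing from $p$ and passing through $q$, say $c(t_0) = q$ for some $t_0 \geq 0$ (in the trivial case $p = q$, any geodesic ray from $p$ works, and such a ray exists provided $M$ has more than one point; the one-point case makes Property (S) vacuous). This yields
\[
d(q, c) \leq d(q, c(t_0)) = 0 \leq b.
\]
By the standing assumption of the corollary, the isotropy subgroup $P_{c(\infty)}$ at the endpoint $c(\infty) \in \partial M$ acts transitively on $M$. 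Thus the hypotheses of Theorem~\ref{theorem:SinCATzero} hold with $b = 0$, and the theorem delivers Property (S) for $M$, with explicit bound $\tau(r) = r + 2b = r$.

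There is no real obstacle; the argument is a direct specialization of Theorem~\ref{theorem:SinCATzero} to the case where the ambient CAT(0) space coincides with the $G$-orbit. The only point requiring mild care is the construction of the geodesic ray from $p$ through $q$, but this is immediate from combining the definition of CAT(0) (segment exists) with geodesic completeness (segment extends to a ray).
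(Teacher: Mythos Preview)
Your proposal is correct and matches the paper's intended argument: the paper states the corollary immediately after Theorem~\ref{theorem:SinCATzero} without a separate proof, so it is meant to be precisely the specialization you carry out, taking $\Omega = M$ and $b = 0$ and using geodesic completeness to produce a ray from $p$ through $q$. Your handling of the edge cases ($p = q$, one-point space) is a bit more careful than strictly necessary, but nothing is wrong.
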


\begin{remark}
It should be remarked that Caprace--Monod \cite[Theorem 1.3]{CapraceMonod2009Isometry} proved that 
for a geodesically-complete CAT(0) space $M$ equipped with an isometric action of a group $G$, 
if the isotropy group $P_\xi$ acts cocompactly on $M$ for any $\xi \in \partial M$, 
then $M$ is isometric to a product of symmetric spaces, Euclidean buildings and Bass--Serre trees.
\end{remark}

\section{Kobayashi's properness criterion from  Theorem \ref{theorem:main_section}}\label{section:KobayashiCriterion}

In this section, let us argue that 
Kobayashi's properness criterion (Theorem \ref{theorem:Kobayashi96})
can be explained by applying Theorem \ref{theorem:main_section}
to isometric actions of real linear reductive  Lie groups on non-compact Riemannian symmetric spaces.

Let $G$ be a real linear reductive Lie group.
We fix a realization of $G$ 
as a transpose-stable closed subgroup of $GL(N,\R)$ 
with finitely many connected components ($N \in \Z_{\geq 0}$).
The Cartan involution $\theta$ on $G$ is defined by $\theta(g) := {}^t (g^{-1})$ for $g \in G$,
where ${}^t A$ denotes the transpose of the matrix $A$.
Then the $\theta$-fixed point subgroup $K = G \cap O(n)$ of $G$ 
is a maximal compact subgroup of $G$.
The Lie algebra of $G$ is denoted by $\mathfrak{g}$ and we write $\mathfrak{g} = \mathfrak{k} + \mathfrak{p}$ for the Cartan decomposition of $\mathfrak{g}$ with respect to the Cartan involution $\theta$. 
Note that by the adjoint representation, which will be denoted by $\Ad : G \rightarrow GL(\mathfrak{g})$, the group $K$ acts on $\mathfrak{p}$.

We write $M := G/K$ for the homogeneous manifold of $(G,K)$ with the base point $* := eK \in M = G/K$, where $e$ is the unit of $G$.
The quotient map from $G$ to $M = G/K$ is denoted by $\pi : G \rightarrow M$.
As is well-known that the tangent space $T_*M$ of $M$ at the point $*$ can be identified with $\mathfrak{p}$ canonically.
Let us fix a $K$-invariant inner-product on $\mathfrak{p}$.
Then such the inner-product defines a 
$G$-invariant Riemannian metric on $M$.
Then it is well-known that $M$ equipped with such the metric is a simply-connected Riemannian globally symmetric space without compact factors (see \cite[Chapter V]{HelgasonDiff} for the details), 
In particular, 
$M$ has a direct product decomposition 
\[
M = M_0 \times M_{-}
\]
where $M_0$ is a Euclidean space and $M_-$ is a Riemannian globally symmetric space of non-compact type.
Note that for any parallel shift $t$ on the Euclidean space $M_0$ and any $h \in \Isom_0(M_{-})$, one can find $g \in G$ such that 
$g$ defines the isometry $(t,h)$ on $M = M_0 \times M_{-}$,
where $\Isom_0(M_{-})$ denotes the identity component of the full isometry group of $M_{-}$.

Since $M_0$ and $M_{-}$ are both geodesically-complete CAT(0) spaces (see \cite{Caprace2014Lectures}),
our $M = M_0 \times M_{-}$ is also a geodesically-complete CAT(0) space.
Let us denote by $\partial M = \partial M_0 \times \partial M_{-}$ the boundary at infinity of $M$ (see Section \ref{subsection:notationCATzero} for the definition of $\partial M$).
Then for each $\xi = (\xi_0,\xi_{-}) \in \partial M = \partial M_0 \times \partial M_{-}$, 
the fixed point subgroup $P_\xi := \{ g \in G \mid g \cdot \xi = \xi \}$ acts on $M$ transitively.
In fact, 
on the Euclidean space $M_0$, 
the group of parallel shifts acts transitively on $M_0$ and fixes the boundary $\partial M_0$  pointwisely,
and on $M_{-}$, the fixed point subgroup at $\xi_{-}$ of the connected linear semisimple Lie group $\Isom_0(M_-)$ is a parabolic subgroup and acts on $M_-$ transitively (See \cite[\S I.1 and I.2]{BorelJiCompSLS} for the details).
Therefore, by Corollary \ref{corollary:Ptransitive}, 
our $G$-space $M$ has Property (S).

In addition, let us choose a maximal abelian subspace $\mathfrak{a}$ of $\mathfrak{p}$,
and consider the inner-product on $\mathfrak{a}$ induced by the fixed inner-product on $\mathfrak{p}$.

We define the finite group $W$ by 
\begin{align*}
N_K(\mathfrak{a}) &:= \{ k \in K \mid \Ad(k) \mathfrak{a} = \mathfrak{a} \} \subset K, \\
Z_K(\mathfrak{a}) &:= \{ k \in K \mid \Ad(k) X = X \text{ for each } X \in \mathfrak{a} \} \subset N_K(\mathfrak{a}),  \\
W &:= N_K(\mathfrak{a})/{Z_K(\mathfrak{a})}.
\end{align*}
Then $W$ can be considered as a finite subgroup of the orthogonal group $O(\mathfrak{a})$ on the inner-product space $\mathfrak{a}$.
Note that if $G$ is contained in a connected complex Lie group $G_\C$, 
then the finite group $W$ defined above coincides with the Weyl group of the restricted root system for $(\mathfrak{g},\mathfrak{a})$.

We define $A := \exp(\mathfrak{a}) \subset G$ and consider the connected, closed, totally-geodesic flat submanifold $\pi(A) = A \cdot *$ of $M$.
Note that the bijective map 
\[
\iota : \mathfrak{a} \rightarrow \pi(A), ~X \mapsto \exp(X) \cdot *
\]
is isometric, and thus the finite group $W$ also acts on $\pi(A)$ isometrically.
It is well-known (see \cite[Example 2.1 (iv)]{Thorbergsson2005Transformation}) that $\pi(A)$ is a section in the sense of the last part of Section \ref{subsection:Korbits}, 
and the generalized Weyl group coincides with the finite group $W$ defined above.
%(see \cite{BottSamelson1958MorseSymm}).

Therefore, Theorem \ref{theorem:Kobayashi96} can be obtained by applying Theorem \ref{theorem:main_section} to 
the section $\Sigma = \pi(A) \cong \mathfrak{a}$.
Note that for pairs of non-empty subsets $(C_1,C_2)$ of $\mathfrak{a}$, 
$C_1 \sim C_2$ as in a locally-compact additive group $\mathfrak{a}$ 
if and only if 
$C_1 \sim C_2$ as in a proper metric space $\mathfrak{a}$.
Therefore, $(C_1,C_2)$ is proper in the group $\mathfrak{a}$ if and only if $(C_1,C_2)$ is HBI in the proper metric space $\mathfrak{a}$.

\section*{Acknowledgements.}
The second author obtained his PhD degree under the supervision of Professor Toshiyuki Kobayashi, and learned many things about proper actions on homogeneous spaces.
Without his guidance, our studies would not have been possible.
We are also indebted to Hiroshi Tamaru, Akira Kubo, Koichi Tojo, Kazuki Kannaka and Kaede Aoyama for many helpful comments.

The first author is supported by JST SPRING, Grant Number JP-MJSP2132.
The second author is supported by JSPS Grants-in-Aid for Scientific Research JP20K03589, JP20K14310, and JP22H0112.

%\bibliographystyle{amsplain}
%\bibliography{reference}

\begin{thebibliography}{10}

\bibitem{BakloutiBejarFendri2023CptExtRn}
Baklouti Ali, Bejar Souhail, and Fendri Ramzi, \emph{A criterion of proper
  action on some compact extensions of {$\mathbb{R}^n$} and applications},
  Internat. J. Math.~Online First: 2350010 (23 pages) (2023).

\bibitem{BakloutiKhlif2005ExpSolv}
Ali Baklouti and Fatma Khlif, \emph{Proper actions and some exponential
  solvable homogeneous spaces}, Internat. J. Math. \textbf{16} (2005), no.~9,
  941--955. \MR{2180060}

\bibitem{BakloutiKhlif2007weak}
\bysame, \emph{Weak proper actions on solvable homogeneous spaces}, Internat.
  J. Math. \textbf{18} (2007), no.~8, 903--918. \MR{2339576}

\bibitem{Benoist96}
Yves Benoist, \emph{Actions propres sur les espaces homog\`enes r\'{e}ductifs},
  Ann. of Math. (2) \textbf{144} (1996), no.~2, 315--347. \MR{1418901}

\bibitem{BochenskiTralle2015ahyp}
Maciej Boche\'{n}ski and Aleksy Tralle, \emph{Clifford-{K}lein forms and
  a-hyperbolic rank}, Int. Math. Res. Not. IMRN (2015), no.~15, 6267--6285.
  \MR{3384476}

\bibitem{BorelJiCompSLS}
Armand Borel and Lizhen Ji, \emph{Compactifications of symmetric and locally
  symmetric spaces}, Mathematics: Theory \& Applications, Birkh\"{a}user
  Boston, Inc., Boston, MA, 2006. \MR{2189882}

\bibitem{BridsonHaefligerMetricSpaces}
Martin~R. Bridson and Andr\'{e} Haefliger, \emph{Metric spaces of non-positive
  curvature}, Grundlehren der mathematischen Wissenschaften [Fundamental
  Principles of Mathematical Sciences], vol. 319, Springer-Verlag, Berlin,
  1999. \MR{1744486}

\bibitem{CalabiMarkus1962}
E.~Calabi and L.~Markus, \emph{Relativistic space forms}, Ann. of Math. (2)
  \textbf{75} (1962), 63--76. \MR{133789}

\bibitem{Caprace2014Lectures}
Pierre-Emmanuel Caprace, \emph{Lectures on proper {$\rm CAT(0)$} spaces and
  their isometry groups}, Geometric group theory, IAS/Park City Math. Ser.,
  vol.~21, Amer. Math. Soc., Providence, RI, 2014, pp.~91--125. \MR{3329726}

\bibitem{CapraceMonod2009Isometry}
Pierre-Emmanuel Caprace and Nicolas Monod, \emph{Isometry groups of
  non-positively curved spaces: structure theory}, J. Topol. \textbf{2} (2009),
  no.~4, 661--700. \MR{2574740}

\bibitem{Gorodski2022arXivTopics}
Claudio Gorodski, \emph{Topics in polar actions}, arXiv preprint
  arXiv:2208.03577 (2022).

\bibitem{GueritaudKassel2017maximally}
Fran\c{c}ois Gu\'{e}ritaud and Fanny Kassel, \emph{Maximally stretched
  laminations on geometrically finite hyperbolic manifolds}, Geom. Topol.
  \textbf{21} (2017), no.~2, 693--840. \MR{3626591}

\bibitem{HelgasonDiff}
Sigurdur Helgason, \emph{Differential geometry, {L}ie groups, and symmetric
  spaces}, Graduate Studies in Mathematics, vol.~34, American Mathematical
  Society, Providence, RI, 2001, Corrected reprint of the 1978 original.
  \MR{1834454}

\bibitem{Kannaka2021linear}
Kazuki Kannaka, \emph{Linear independence of generalized {P}oincar\'{e} series
  for anti--de {S}itter 3-manifolds}, SIGMA Symmetry Integrability Geom.
  Methods Appl. \textbf{17} (2021), Paper No. 042, 15. \MR{4247755}

\bibitem{Kannaka2021quantitative}
\bysame, \emph{A quantitative study of orbit counting and discrete spectrum for
  anti--de {S}itter 3-manifolds}, Proc. Japan Acad. Ser. A Math. Sci.
  \textbf{97} (2021), no.~10, 93--97. \MR{4355065}

\bibitem{Kassel2008corankone}
Fanny Kassel, \emph{Proper actions on corank-one reductive homogeneous spaces},
  J. Lie Theory \textbf{18} (2008), no.~4, 961--978. \MR{2523148}

\bibitem{Kassel12}
\bysame, \emph{Deformation of proper actions on reductive homogeneous spaces},
  Math. Ann. \textbf{353} (2012), no.~2, 599--632. \MR{2915550}

\bibitem{KasselKobayashi16}
Fanny Kassel and Toshiyuki Kobayashi, \emph{Poincar\'{e} series for
  non-{R}iemannian locally symmetric spaces}, Adv. Math. \textbf{287} (2016),
  123--236. \MR{3422677}

\bibitem{Kobayashi89}
Toshiyuki Kobayashi, \emph{Proper action on a homogeneous space of reductive
  type}, Math. Ann. \textbf{285} (1989), no.~2, 249--263. \MR{1016093}

\bibitem{Kobayashi92Fuji}
\bysame, \emph{Discontinuous groups acting on homogeneous spaces of reductive
  type}, Representation theory of {L}ie groups and {L}ie algebras
  ({F}uji-{K}awaguchiko, 1990), World Sci. Publ., River Edge, NJ, 1992,
  pp.~59--75. \MR{1190750}

\bibitem{Kobayashi1992necessary}
\bysame, \emph{A necessary condition for the existence of compact
  {C}lifford-{K}lein forms of homogeneous spaces of reductive type}, Duke Math.
  J. \textbf{67} (1992), no.~3, 653--664. \MR{1181319}

\bibitem{Kobayashi96}
\bysame, \emph{Criterion for proper actions on homogeneous spaces of reductive
  groups}, J. Lie Theory \textbf{6} (1996), no.~2, 147--163. \MR{1424629}

\bibitem{Kobayashi1998deformation}
\bysame, \emph{Deformation of compact {C}lifford-{K}lein forms of
  indefinite-{R}iemannian homogeneous manifolds}, Math. Ann. \textbf{310}
  (1998), no.~3, 395--409. \MR{1612325}

\bibitem{Kobayashi-unlimit}
\bysame, \emph{Discontinuous groups for non-{R}iemannian homogeneous spaces},
  Mathematics unlimited---2001 and beyond, Springer, Berlin, 2001,
  pp.~723--747. \MR{1852186}

\bibitem{Kobayashi22conjecture}
Toshiyuki Kobayashi, \emph{Conjectures on reductive homogeneous spaces},
  arXiv:2204.08854v1, 2022.

\bibitem{KobayashiYoshino05}
Toshiyuki Kobayashi and Taro Yoshino, \emph{Compact {C}lifford-{K}lein forms of
  symmetric spaces---revisited}, Pure Appl. Math. Q. \textbf{1} (2005), no.~3,
  Special Issue: In memory of Armand Borel. Part 2, 591--663. \MR{2201328}

\bibitem{Lipsman1995proper}
Ronald~L. Lipsman, \emph{Proper actions and a compactness condition}, J. Lie
  Theory \textbf{5} (1995), no.~1, 25--39. \MR{1362008}

\bibitem{Morita2022Cartan}
Yosuke Morita, \emph{Cartan projections of some non-reductive subgroups and
  proper actions on homogeneous spaces}, Transformation Groups (2022), 1--23.

\bibitem{Nasrin2001twostep}
Salma Nasrin, \emph{Criterion of proper actions for 2-step nilpotent {L}ie
  groups}, Tokyo J. Math. \textbf{24} (2001), no.~2, 535--543. \MR{1874988}

\bibitem{OhWette2000new}
Hee Oh and Dave Witte, \emph{New examples of compact {C}lifford-{K}lein forms
  of homogeneous spaces of {${\rm SO}(2,n)$}}, Internat. Math. Res. Notices
  (2000), no.~5, 235--251. \MR{1747110}

\bibitem{Okuda13}
Takayuki Okuda, \emph{Classification of semisimple symmetric spaces with proper
  {$SL(2,\Bbb R)$}-actions}, J. Differential Geom. \textbf{94} (2013), no.~2,
  301--342. \MR{3080484}

\bibitem{PT87}
Richard~S. Palais and Chuu-Lian Terng, \emph{A general theory of canonical
  forms}, Trans. Amer. Math. Soc. \textbf{300} (1987), no.~2, 771--789.
  \MR{876478}

\bibitem{Salein2000}
Fran\c{c}ois Salein, \emph{Vari\'{e}t\'{e}s anti-de {S}itter de dimension 3
  exotiques}, Ann. Inst. Fourier (Grenoble) \textbf{50} (2000), no.~1,
  257--284. \MR{1762345}

\bibitem{Thorbergsson2005Transformation}
Gudlaugur Thorbergsson, \emph{Transformation groups and submanifold geometry},
  Rend. Mat. Appl. (7) \textbf{25} (2005), no.~1, 1--16. \MR{2142121}

\bibitem{Tojo2019classifcation}
Koichi Tojo, \emph{Classification of irreducible symmetric spaces which admit
  standard compact {C}lifford-{K}lein forms}, Proc. Japan Acad. Ser. A Math.
  Sci. \textbf{95} (2019), no.~2, 11--15. \MR{3905124}

\bibitem{Yoshino2004fourdim}
Taro Yoshino, \emph{A solution to {L}ipsman's conjecture for {${\Bbb R}^4$}},
  Int. Math. Res. Not. (2004), no.~80, 4293--4306. \MR{2126626}

\bibitem{Yoshino2005counterLips}
\bysame, \emph{A counterexample to {L}ipsman's conjecture}, Internat. J. Math.
  \textbf{16} (2005), no.~5, 561--566. \MR{2141321}

\bibitem{Yoshino2007threestep}
\bysame, \emph{Criterion of proper actions for 3-step nilpotent {L}ie groups},
  Internat. J. Math. \textbf{18} (2007), no.~7, 783--795. \MR{2337155}

\bibitem{Yoshino2007CartanMotion}
\bysame, \emph{Criterion of proper actions on homogeneous spaces of {C}artan
  motion groups}, Internat. J. Math. \textbf{18} (2007), no.~3, 245--254.
  \MR{2314610}

\bibitem{Yoshino2007duality}
\bysame, \emph{Discontinuous duality theorem}, Internat. J. Math. \textbf{18}
  (2007), no.~8, 887--893. \MR{2339574}

\end{thebibliography}

\providecommand{\bysame}{\leavevmode\hbox to3em{\hrulefill}\thinspace}
\providecommand{\MR}{\relax\ifhmode\unskip\space\fi MR }
% \MRhref is called by the amsart/book/proc definition of \MR.
\providecommand{\MRhref}[2]{%
  \href{http://www.ams.org/mathscinet-getitem?mr=#1}{#2}
}
\providecommand{\href}[2]{#2}

\end{document}